\renewcommand*{\backref}[1]{}
\renewcommand*{\backrefalt}[4]{%
  \ifcase #1 (Not cited.)%
  \or        (Page~#2.)%
  \else      (Pages~#2.)%
  \fi}
\newtheorem{theorem}{Theorem}[section]
\newtheorem{prop}[theorem]{Proposition}
\newtheorem{lemma}[theorem]{Lemma}
\theoremstyle{remark}
\newtheorem{remark}[theorem]{Remark}
\theoremstyle{definition}
\newtheorem{definition}[theorem]{Definition}
\newtheorem{cor}[theorem]{Corollary}
\numberwithin{equation}{section}
\def\Xint#1{\mathchoice
  {\XXint\displaystyle\textstyle{#1}}%
  {\XXint\textstyle\scriptstyle{#1}}%
  {\XXint\scriptstyle\scriptscriptstyle{#1}}%
  {\XXint\scriptscriptstyle\scriptscriptstyle{#1}}%
  \!\int}
\def\XXint#1#2#3{{\setbox0=\hbox{$#1{#2#3}{\int}$}
    \vcenter{\hbox{$#2#3$}}\kern-.5\wd0}}
\def\dashint{\Xint-}
\def\cD{\mathcal D}
\def\cR{\mathcal R}
\def\C{\mathbb C}
\def\N{\mathbb N}
\def\R{\mathbb R}
\def\Z{\mathbb Z}
\def\vol{\operatorname{Vol}}
\def\ric{\mathrm{Ric}}
\def\dgh{d_{\mathrm{GH}}}
\author{Shih-Kai Chiu}
\address{Mathematical Institute, University of Oxford, UK}
\email{shih-kai.chiu@maths.ox.ac.uk}
\title[Subquadratic harmonic functions on Calabi-Yau
manifolds]{Subquadratic harmonic functions on Calabi-Yau manifolds
  with maximal volume growth}
\begin{document}
\maketitle
\begin{abstract}
  On a complete Calabi-Yau manifold $M$ with maximal volume growth, a
  harmonic function with subquadratic polynomial growth is the real
  part of a holomorphic function. This generalizes a result of
  Conlon-Hein. We prove this result by proving a Liouville type
  theorem for harmonic $1$-forms, which follows from a new local $L^2$
  estimate of the exterior derivative.
\end{abstract}

% \tableofcontents

\section{Introduction}
In this paper, we define a Calabi-Yau manifold to be a Ricci-flat
K\"ahler manifold. Let $(M,\omega)$ be a complete noncompact
Calabi-Yau manifold of complex dimension $n$. We say $(M,\omega)$ has
maximal volume growth if there exists $v>0$ such that for $p \in M$
and $r>0$, we have
\begin{equation*}
  \vol(B(p, r)) \ge vr^{2n}.
\end{equation*}

The study of complete noncompact Calabi-Yau manifolds dates back to
the foundational papers of Tian-Yau \cite{TianYauI, TianYauII}, in
which they construct complete Ricci-flat K\"ahler metrics on the
complement of a neat, almost ample divisor in a projective variety. An
important class of examples is asymptotically conical (AC) Calabi-Yau
manifolds. An AC Calabi-Yau manifold is a complete noncompact
Ricci-flat K\"ahler manifold such that outside a compact subset, the
manifold is diffeomorphic to a Ricci-flat K\"ahler cone, and the
metric on the manifold is (polynomially) asymptotic to the cone
metric.  Explicit important examples of AC Calabi-Yau manifolds
include smoothing and small resolutions of the ordinary double point;
see e.g. \cite{Candelas}\cite{Stenzel}. For a Calabi-Yau manifold $M$
with maximal volume growth, Cheeger-Tian \cite{CheegerTian} show that
if one tangent cone at infinity $M$ satisfies an integrability
condition, which implies smoothness, then $M$ is AC Calabi-Yau. We
refer the reader to Conlon-Hein \cite{ConlonHeinI, ConlonHeinII,
  ConlonHeinIII} for important results of AC Calabi-Yau manifolds.

The AC condition is restrictive, as it implies the tangent cone at
infinity is both smooth and unique. There are complete noncompact
Calabi-Yau manifolds with maximal volume growth which are not AC
Calabi-Yau. For example, Joyce's QALE manifolds \cite{Joyce} have
$\C^n/\Gamma$ as tangent cone at infinity, where
$\Gamma \subset \mathrm{SU}(n)$ is a discrete subgroup which does not
act freely on $\C^n$. Recently, Li \cite{Li}, Conlon-Rochon
\cite{ConlonRochon} and Sz\'ekelyhidi \cite{Szekelyhidi} independently
construct non-flat Calabi-Yau metrics with maximal volume growth on
$\C^3$; the last two groups have various generalizations of the
construction to higher dimensions. These metrics are not AC, since the
tangent cone at infinity of these metrics is $\C \times A_1$, where
$A_1$ denotes the singularity
\begin{equation*}
  \C^2 / \Z_2 \simeq \{ z_1^2 + z_2^2 + z_3^2 = 0 \} \subset \C^3,
\end{equation*}
equipped with the flat cone metric. This motivates us to try to
understand the relationship between these metrics. More generally, we
would like to study deformations of Calabi-Yau metrics on a Calabi-Yau
manifold with maximal volume growth. Since such manifolds also serve
as local models in various gluing constructions of compact Calabi-Yau
manifolds, the question of uniqueness is also natural in this respect.

In Conlon-Hein \cite{ConlonHeinI}, the rigidity of AC Calabi-Yau
metrics is shown by first showing that a harmonic function with
subquadratic growth is pluriharmonic. This particular result can be
seen as the linearized version of the rigidity of the complex
Monge-Amp\`ere equation. In this paper, we generalize this result (see
Theorem 3.8 and Corollary 3.9 in \cite{ConlonHeinI}) to the following:

\begin{theorem}
  \label{maintheorem}
  Let $M$ be a complete noncompact Ricci-flat manifold with maximal
  volume growth. Let $u$ be a harmonic $1$-form on $M$, and by this we
  mean that $(dd^*+d^*d)u=0$. Suppose $u$ has sublinear growth,
  i.e. there exist constants $C>0$ and $s<1$ such that
  \begin{equation*}
    |u| \le C(1+r)^s,
  \end{equation*}
  where $r$ is the distance function from a fixed point $p \in
  M$. Then $u = df$, where $f$ is a subquadratic harmonic function.
\end{theorem}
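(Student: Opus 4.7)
The plan is to combine the Ricci-flat Weitzenb\"ock formula for harmonic $1$-forms with a Caccioppoli-type energy estimate to get a local $L^2$ bound on $\nabla u$, and then to use this bound together with the sublinearity hypothesis to conclude that $u$ is exact with a subquadratic primitive.

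First, since $\ric\equiv 0$, a harmonic $1$-form $u$ satisfies $\nabla^*\nabla u = 0$, so that
\begin{equation*}
\tfrac12 \Delta|u|^2 \;=\; |\nabla u|^2 \;\ge\; 0.
\end{equation*}
In particular $|u|^2$ is subharmonic, and Bishop--Gromov together with the Euclidean volume growth hypothesis provides two-sided bounds on $\vol(B(p,r))$. Multiplying the Bochner identity by $\phi^2$, with $\phi$ a cutoff that equals $1$ on $B(p,r)$, supported in $B(p,2r)$, and satisfying $|\nabla\phi|\le C/r$, and integrating by parts produces the baseline estimate
\begin{equation*}
\int_{B(p,r)}|\nabla u|^2 \;\le\; \frac{C}{r^2}\int_{B(p,2r)}|u|^2,
\end{equation*}
which after inserting $|u|\le C(1+r)^s$ and the volume upper bound yields polynomial control on the $L^2$ energy of $\nabla u$. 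The sharpened $L^2$ estimate promised in the abstract is presumably obtained by iterating this in annular regions and using the refined Kato inequality for harmonic $1$-forms, extracting genuine decay from the gap $1-s>0$.

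The main step is to promote this $L^2$ control to exactness of $u$. Since $u$ is closed, the obstruction lives in $H^1_{dR}(M)$, and one must show that every period $\int_\gamma u$ vanishes. Here I would invoke the Cheeger--Colding structure theory: a tangent cone of $M$ at infinity is a metric cone over a compact base, hence contractible, so after rescaling a loop $\gamma$ at scale $R$ becomes close to a loop that bounds in the cone. The sharpened local $L^2$ estimate provides the error control needed to conclude $\int_\gamma u\to 0$ as $R\to\infty$, hence $\int_\gamma u = 0$. An alternative, closer to the second proof alluded to in the abstract, is to construct $f$ directly by solving $df=u$ on compact exhaustions and use the $L^2$ decay of $\nabla u$ to pass to the limit.

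Once $u=df$ is established, co-closedness $d^*u=0$ gives $\Delta f = d^*df = 0$, so $f$ is harmonic; and integrating $|df|=|u|\le C(1+r)^s$ along minimizing geodesics from $p$ yields $|f(x)-f(p)|\le C(1+r)^{s+1}$, which is subquadratic because $s<1$. The principal obstacle is the exactness step: the new local $L^2$ estimate has to do the real work of converting analytic decay of $\nabla u$ into topological triviality of the cohomology class of $u$, and the sublinearity $s<1$ is essential, since linearly growing closed harmonic $1$-forms can carry genuinely nonzero periods at infinity.
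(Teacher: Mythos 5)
There is a genuine gap at the heart of your plan: you never actually prove that $u$ is closed and coclosed, and the tools you propose cannot do it. The Caccioppoli estimate
\begin{equation*}
\int_{B(p,r)}|\nabla u|^2 \;\le\; \frac{C}{r^2}\int_{B(p,2r)}|u|^2
\end{equation*}
combined with $|u|\le C(1+r)^s$ only gives $\dashint_{B(p,R)}|\nabla u|^2\le CR^{2s-2}$, i.e.\ decay of the \emph{average over the large ball} $B(p,R)$. Since the total energy $\int_{B(p,R)}|\nabla u|^2$ still grows like $R^{2n+2s-2}$, this says nothing about $\nabla u$ at any fixed point, so you cannot conclude $du=0$ or $d^*u=0$; yet your exactness step opens with ``since $u$ is closed,'' and your last paragraph invokes $d^*u=0$. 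What is actually needed, and what the paper's Theorem \ref{local_l2} provides, is an estimate in which the \emph{inner} ball is fixed: after rescaling one gets $\dashint_{B(p,1)}|du|^2+|d^*u|^2\le C\,R^{-(2-2\delta-2s)}\to 0$ as the outer radius $R\to\infty$, forcing $du=d^*u\equiv 0$. Producing the $r^{-2\delta}$ (rather than $r^{-2}$) loss is the entire content of Section 3, and it is not obtained by ``iterating in annular regions with the refined Kato inequality.'' It comes from Cheeger--Colding cone rigidity (all but finitely many dyadic scales of $B(p,1)$ are Gromov--Hausdorff close to limit Ricci-flat cones), the classification of homogeneous harmonic $1$-forms on such cones (the Cheeger--Tian lemma, Lemma \ref{homogeneous1} and Corollary \ref{exact}, showing every homogeneous harmonic $1$-form of growth rate $<1$ is exact), and the resulting almost-monotonicity Lemma \ref{epsilon_monotonicity}: the component of $u$ orthogonal to closed-and-coclosed forms must grow almost linearly across each good scale. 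The refined Kato inequality improves a pointwise constant but supplies no such rigidity, so the ``sharpened estimate'' in your plan remains unproved and the argument does not close.

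The endgame of your proposal is essentially right and matches the paper: once $du=d^*u=0$ is known, exactness follows because $H^1(M)=0$ (the paper cites Anderson's result for complete manifolds of nonnegative Ricci curvature with Euclidean volume growth, which is cleaner than the loop-shrinking argument you sketch and avoids having to control periods by hand), and then $\Delta f=d^*df=0$ together with integration of $|df|=|u|\le C(1+r)^s$ along minimizing geodesics gives the subquadratic harmonic primitive. But these are the easy steps; the missing tangent-cone analysis is where the theorem is actually proved.
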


Theorem \ref{maintheorem} can be seen as an analogue of the Liouville
theorem of Cheng \cite{Cheng} that every harmonic function with
sublinear growth on a complete manifold with nonnegative Ricci
curvature is constant. As a corollary, we have

\begin{theorem}
  \label{pluriharmonictheorem}
  Let $M$ be a complete noncompact Calabi-Yau manifold with maximal
  volume growth. Then any subquadratic harmonic function on $M$ is the
  real part of a subquadratic holomorphic function. In particular, it is
  pluriharmonic.
\end{theorem}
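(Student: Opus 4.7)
The plan is to deduce Theorem \ref{pluriharmonictheorem} from Theorem \ref{maintheorem} by constructing a harmonic conjugate of $f$. Let $f$ be a subquadratic harmonic function, so $|f|\le C(1+r)^{s}$ for some $s<2$. The first step is to observe that $df$ is automatically a harmonic $1$-form, since $d(df)=0$ and $d^{*}df=-\Delta f=0$. Because $\ric\equiv 0$, the Bochner identity implies $|\nabla f|^{2}$ is subharmonic, and combining the resulting mean value inequality with the standard Caccioppoli estimate for a harmonic function yields a gradient estimate of the form
\begin{equation*}
|\nabla f|(x) \le \frac{C}{r(x)} \sup_{B(x,r(x))} |f| \le C'(1+r(x))^{s-1},
\end{equation*}
so that $df$ has sublinear growth.

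Next I would bring in the K\"ahler structure. Since $\nabla J=0$, the complex structure $J$ is a pointwise isometry of $T^{*}M$ and commutes with the Hodge Laplacian on forms (it acts as the scalar $i^{p-q}$ on $(p,q)$-pieces). Therefore $J(df)$ is again a harmonic $1$-form with the same sublinear growth bound. Applying Theorem \ref{maintheorem} to $J(df)$ produces a subquadratic harmonic function $h$ with $dh=J(df)$.

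The final step is a formal identification: up to the standard sign convention, the equation $dh=J(df)$ is exactly the Cauchy-Riemann system asserting that $F:=f-ih$ (or $f+ih$, depending on orientation) is holomorphic. Since $f$ and $h$ are both subquadratic, so is $F$, and $f=\mathrm{Re}\,F$; in particular $f$ is pluriharmonic, proving Theorem \ref{pluriharmonictheorem}. All the analytic weight sits in Theorem \ref{maintheorem}; once that is granted, the remaining obstacle is merely to keep the signs straight when matching the $J$-action on $T^{*}M$ to the Cauchy-Riemann equations for $f\pm ih$.
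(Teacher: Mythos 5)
Your proof is correct and is essentially the paper's own argument: since $J(df)=d^cf=i(\bar{\partial}-\partial)f$, applying Theorem \ref{maintheorem} to this sublinear harmonic $1$-form (sublinearity coming from the gradient estimate) and recognizing $dh=J(df)$ as the Cauchy--Riemann system for $f+ih$ is exactly what the paper does. No substantive difference.
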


\begin{remark}
  \label{firstremark}
  (1) When $M$ is AC Calabi-Yau, the proof in \cite{ConlonHeinI} shows
  that actually we can replace the subquadratic growth condition with
  the weaker condition $o(r^2)$ as $r \to \infty$. See Remark
  \ref{endremark} for details. It is interesting to know whether
  Theorem \ref{pluriharmonictheorem} still holds when we only assume
  $o(r^2)$.  (2) Theorem \ref{maintheorem} and
  \ref{pluriharmonictheorem} are false without the assumption of
  maximal volume growth. One counterexample is the Taub-NUT manifold,
  which is $\C^2$ equipped with a hyperk\"ahler metric with cubic
  volume growth. There is a harmonic function with linear growth which
  is not pluriharmonic. See Remark 3.10 in \cite{ConlonHeinI} for more
  details.
\end{remark}

\begin{proof}[Proof of Theorem \ref{pluriharmonictheorem}]
  Let $f$ be a subquadratic harmonic function. Then
  $d^cf = i(\bar{\partial}-\partial)f$ is a sublinear harmonic $1$-form
  by the Cheng-Yau gradient estimate~\cite{ChengYau}. So $d^cf = dg$ by
  Theorem \ref{maintheorem}. It follows that $f$ is the real part of the
  subquadratic holomorphic function $f+ig$.
\end{proof}

In \cite{ConlonHeinI}, the main idea is to compare the AC manifold
with its tangent cone at infinity. A lemma of Cheeger-Tian
\cite{CheegerTian}, roughly speaking, says that Theorem
\ref{maintheorem} is true on any cones with nonnegative Ricci
curvature. The comparison is then carried out using weighted function
spaces and the polynomial convergence of the metrics. In general we do
not have such linear theory, and the tangent cone at infinity might
have non-isolated singularities.

One approach to tackle the general case of maximal volume growth would
be to show that the spaces of harmonic functions with polynomial
growth on the manifold have the same dimensions as the corresponding
spaces on any tangent cone at infinity, and that the exterior
derivative mapping subquadratic harmonic functions to sublinear
harmonic $1$-forms is surjective. In general this is only possible if
we assume unique tangent cone at infinity. However in the K\"ahler
case, tangent cones at infinity are affine algebraic
\cite{LiuSzekelyhidi}. As a consequence, part of the harmonic spectrum
is discrete, and this is sufficient for establishing
Theorem~\ref{maintheorem} in the K\"ahler case. We refer to
\cite[Section~4.2]{ChiuThesis} for a proof following this line of
thought.

In this paper we follow another approach, which is to show that any
sublinear polynomial growth harmonic $1$-form is both closed and
coclosed. Once this is shown, then a result of
Anderson~\cite{Anderson90} showing that $H^1(M) = 0$ implies that such
harmonic $1$-forms must be exact. As an analogue of the proof of the
Liouville theorem for harmonic functions~\cite{Cheng}, we prove this
Liouville type theorem for harmonic $1$-forms by establishing the
following $L^2$ analogue of the Cheng-Yau gradient
estimate~\cite{ChengYau}:

\begin{theorem}
  \label{local_l2}
  Let $B(p,2)$ be a metric ball with $\vol(B(p,1)) \ge v > 0$ and
  $|\mathrm{Ric}| \le 1$. Then for any $\delta > 0$, there exists a
  constant $C > 0$ depending on $\delta$ and $v$ such that for any
  harmonic $1$-form $u$ on $B(p,1)$, we have
  \begin{equation*}
    \dashint_{B(p,r)} |du|^2 + |d^*u|^2 \le C r^{-2\delta} \dashint_{B(p,1)} |u|^2
  \end{equation*}
  for any $r \in (0,1/2]$.
\end{theorem}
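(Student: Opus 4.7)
The plan is to combine a Weitzenb\"ock-Caccioppoli estimate and a Moser iteration on $|u|^2$ to dispose of the case $r=1/2$ and the $d^*u$ contribution, leaving the harmonic 2-form $du$ as the only part that can carry the $r^{-2\delta}$ degeneration. The latter I would control by a blow-up and compactness argument, contradicting the Cheeger-Tian Liouville property on Ricci-flat tangent cones.

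\textbf{Base case $r=1/2$ and $L^\infty$ bound.} For a Hodge-harmonic 1-form $u$ the Weitzenb\"ock identity reads $\nabla^*\nabla u = -\ric(u)$. Pairing with $\phi^2 u$ for a cutoff $\phi$ between $B(p,1/2)$ and $B(p,1)$, using $|\ric|\le 1$, and absorbing via Cauchy-Schwarz gives the Caccioppoli inequality
\[
\int_{B(p,1/2)}|\nabla u|^2 \le C(n)\int_{B(p,1)}|u|^2.
\]
Since $|du|^2+|d^*u|^2\le 2|\nabla u|^2$ and Bishop-Gromov (from $\ric\ge -1$ and $\vol(B(p,1))\ge v$) gives comparable volumes, this settles the case $r=1/2$ without any $r^{-2\delta}$ factor. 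The same identity yields $\Delta|u|^2 \ge -2|u|^2$, and Moser iteration then provides the mean value inequality $\sup_{B(p,r)}|u|^2 \le C(n,v)\dashint_{B(p,2r)}|u|^2$ for $r\le 1/2$, giving $\dashint_{B(p,r)}|u|^2\le C(n,v)\dashint_{B(p,1)}|u|^2$ uniformly for such $r$.

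\textbf{The $d^*u$ part is uniform in $r$.} Since the Hodge Laplacian commutes with $d$ and $d^*$, the scalar $f:=d^*u$ is itself a harmonic function. The mean value inequality for the subharmonic $f^2$ gives $\sup_{B(p,r)}f^2 \le C\dashint_{B(p,1/2)}f^2 \le C\dashint_{B(p,1)}|u|^2$, using the Caccioppoli bound from the first step. Hence the $d^*u$-contribution satisfies the desired estimate with constant $C(n,v)$ uniformly in $r$; no $r^{-2\delta}$ loss is needed for it.

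\textbf{The $du$ part by contradiction.} The 2-form $du$ is also Hodge harmonic, but its Weitzenb\"ock identity involves the full Riemann tensor, which $|\ric|\le 1$ does not control, so Moser iteration on $|du|^2$ is unavailable. I would argue by contradiction. Suppose the estimate fails: there exist $\delta>0$, a sequence $(M_i,g_i,p_i)$ with $|\ric_{g_i}|\le 1$ and $\vol(B(p_i,1))\ge v$, harmonic 1-forms $u_i$ on $B(p_i,1)$ with $\dashint_{B(p_i,1)}|u_i|^2 = 1$, and scales $r_i \in (0,1/2]$ with $\dashint_{B(p_i,r_i)}|du_i|^2 > i\,r_i^{-2\delta}$; the $L^\infty$ bound from the first step together with Caccioppoli forces $r_i\to 0$. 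Rescale $\tilde g_i := r_i^{-2}g_i$ and renormalize the 1-form appropriately, so that $B(p_i,r_i)_{g_i}$ becomes the unit ball in $\tilde g_i$, the renormalized form has unit $L^2$ average there, and $|\ric_{\tilde g_i}|\le r_i^2\to 0$ with non-collapsing preserved. Choosing $r_i$ as a function of $i$ along a diagonal, Cheeger-Colding compactness yields a subsequential pointed Gromov-Hausdorff limit that is a non-collapsed Ricci-flat metric cone $C(Y)$. Using the uniform sup bound on $|u_i|$ from the first step and elliptic regularity on the regular set of $C(Y)$, extract a subsequential limit harmonic 1-form $u_\infty$ on $C(Y)$ of at most sublinear growth with $\int_{B(p_\infty,1)}|du_\infty|^2>0$. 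This contradicts the Cheeger-Tian lemma of \cite{CheegerTian}, which asserts that every sublinear harmonic 1-form on a Ricci-flat cone is the differential of a harmonic function, and hence closed. The main obstacle is this last step: producing $u_\infty$ as a legitimate harmonic 1-form and preserving the lower bound on $\int|du|^2$ through the singular set of the cone; this requires the theory of convergence of harmonic forms on non-collapsed Ricci limit spaces (after Cheeger-Colding / Honda) together with careful tracking of normalization constants.
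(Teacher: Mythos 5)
Your base case and your treatment of $d^*u$ are fine and agree with what the paper does (the uniform bound on the $d^*u$ contribution is exactly Remark \ref{dstar}). The problem is the blow-up argument for the $du$ part, which has a genuine gap that is more fundamental than the convergence-through-the-singular-set issue you flag at the end. After rescaling by $r_i^{-2}$ and normalizing so that $\tilde u_i$ has unit $L^2$ average on the new unit ball, the rescaled energy is
\begin{align*}
\dashint_{\tilde B(p_i,1)}|d\tilde u_i|^2 \;=\; \frac{r_i^2\,\dashint_{B(p_i,r_i)}|du_i|^2}{\dashint_{B(p_i,r_i)}|u_i|^2},
\end{align*}
and the quantity $\dashint_{B(p_i,r_i)}|u_i|^2$ in the denominator is not controlled, above or below, relative to the normalization $\dashint_{B(p_i,1)}|u_i|^2=1$. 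Two things then go wrong. First, if $u_i$ near $p_i$ is dominated by a closed and coclosed piece (say $df_i$ for a harmonic function $f_i$), the denominator can be enormous, the normalized energy tends to $0$, and the limit satisfies $du_\infty=0$: no contradiction with the Cheeger--Tian lemma is obtained even though $\dashint_{B(p_i,r_i)}|du_i|^2> i\,r_i^{-2\delta}$. Second, your claim that $u_\infty$ has at most sublinear growth requires a doubling bound of the form $\dashint_{B(p_i,2^k r_i)}|u_i|^2\le 2^{2k(1-\delta)}\dashint_{B(p_i,r_i)}|u_i|^2$ on all intermediate scales; nothing in your setup provides this, and without it you can neither bound the rescaled sequence on balls of fixed radius $R>1$ (so compactness of $\tilde u_i$ fails) nor conclude sublinearity of the limit (so Cheeger--Tian does not apply). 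These are precisely the difficulties that frequency/three-circle monotonicity is designed to address, and they cannot be bypassed by a single blow-up.

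The paper resolves both issues by working scale by scale rather than with one blow-up. By Cheeger--Colding, all but finitely many dyadic balls $B(p,2^{-k})$ are Gromov--Hausdorff close to limit Ricci-flat cones; on each such scale one replaces the form by its $L^2$-orthogonal projection $u_k$ off the space of closed and coclosed harmonic $1$-forms. This projection leaves $du$ and $d^*u$ unchanged, and Lemma \ref{epsilon_monotonicity} (itself proved by a compactness argument against the cone classification of Lemma \ref{homogeneous1}, i.e.\ the Cheeger--Tian lemma) forces the projected form to grow almost linearly, giving after concatenation $\dashint_{B(p,2^{-k-1})}|u_k|^2\le C\,2^{-2k(1-\delta)}\dashint_{B(p,1)}|u|^2$. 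Your Caccioppoli inequality applied to $u_k$ at scale $r\sim 2^{-k}$ then yields the stated $r^{-2\delta}$ bound. To salvage your scheme you would have to build this projection and almost-monotonicity into the blow-up, at which point you have essentially reproduced the paper's argument.
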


\begin{remark}
  (1) Actually, we have a pointwise bound for $|d^*u|^2$ in terms of
  the $L^2$ average of $u$. See Remark \ref{dstar}. (2) The above
  estimate misses the traceless symmetric part of the gradient
  $\nabla u$, but is enough for our purpose.
\end{remark}

By a rescaling argument, we obtain Theorem~\ref{maintheorem} from
Theorem~\ref{local_l2}. Another immediate corollary of Theorem
\ref{local_l2}, which might be of independent interest, is the
following $L^2$ estimate of the complex Hessian of a harmonic
function:

\begin{theorem}
  \label{ddbar}
  Let $(X,\omega)$ be a K\"ahler manifold. Let $B(p,2)$ be a metric ball
  in $X$ with $\vol(B(p,1)) > v > 0$ and $|\ric| \le 1$. Then for any
  $\delta > 0$, there exists a constant $C > 0$, depending on $\delta$
  and $v$, such that for any harmonic function $f$ on $B(p,1)$, we have
  \begin{align*}
    \dashint_{B(p,r)} |\partial\bar{\partial} f|^2 \le C r^{-2\delta}
    \dashint_{B(p,1)} f^2
  \end{align*}
  for any $r \in (0,1/2]$.
\end{theorem}

\begin{remark}
  The counterexample of the Taub-NUT manifold, as discussed in Remark
  \ref{firstremark}, shows that the dependence on the volume lower bound
  $v$ in both Theorem \ref{local_l2} and Theorem \ref{ddbar} cannot be
  removed. This is because we can apply a rescaling argument to obtain
  Theorem \ref{maintheorem} from either Theorem \ref{local_l2} or
  Theorem \ref{ddbar}.
\end{remark}

The proof of Theorem \ref{local_l2}, roughly speaking, reduces to a
statement about harmonic $1$-forms on cones by Cheeger-Colding theory
\cite{CheegerColding}. Thus the Cheeger-Tian lemma mentioned above
plays a crucial role. Another technical ingredient is the monotonicity
of frequency functions of harmonic forms on Ricci-flat cones
(Proposition \ref{monotonicity_singular}), which implies $L^2$ three
circle theorems (Theorem \ref{threecircle}) for harmonic functions and
$1$-forms. The frequency function is a direct generalization of the
one defined in Colding-Minicozzi \cite{ColdingMinicozziI} on cones for
the function case, which in turn is a generalization of the original
version on $\R^n$ defined by Almgren \cite{Almgren}. Frequency
functions have been studied and applied intensively \cite{Almgren1,
  GarofaloLin, GromovSchoen}, and so are various three circle theorems
\cite{ding2004existence, Xu, DonaldsonSunII, Liu}. The relationship
between the monotonicity of frequency functions and the three circle
theorem is made precise and used in Lin \cite{Lin}.

This paper is organized as follows. In Section~2, we prove preliminary
results about harmonic forms on limit Ricci-flat cones. In Section~3,
we prove the monotonicity of Almgren's frequency functions of harmonic
$k$-forms on a limit Ricci-flat cone. In Section~4, we prove the
aforementioned lemma of Cheeger-Tian \cite{CheegerTian} in our
singular setting. Finally in Section~5, we prove Theorem
\ref{local_l2}, and deduce Theorem \ref{maintheorem} and Theorem
\ref{pluriharmonictheorem} as corollaries.

We end this introduction by reviewing the notion of tangent cones at
infinity. Let $(M,g)$ be a complete noncompact Riemannian manifold $M$
with nonnegative Ricci curvature and maximal volume growth, and let
$p \in M$ be a fixed point. Given a sequence of positive number
$r_i \to \infty$, we consider the sequence of pointed Riemannian
manifolds $(M_i, p_i, g_i) = (M, p, r_i^{-2}g)$. By Gromov compactness
theorem \cite{Gromov}, after passing to a subsequence, the sequence
$(M_i, p_i, g_i)$ converges to a complete metric space $(T_\infty, d)$
in the pointed Gromov-Hausdorff sense. Cheeger-Colding theory
\cite{CheegerColdingI} tells us that the limit space $(T_\infty, d)$
is actually a metric cone. We call $(T_\infty, d)$ a tangent cone at
infinity of $M$. If the metric $g$ is Ricci-flat, then the results of
Anderson \cite{Anderson} and Cheeger-Naber \cite{CheegerNaber}
combined show the following:
\begin{itemize}
\item the regular set is open and dense,
\item the singular set of $T_\infty$ is closed of Minkowski
  codimension at least $4$, and
\item on the regular set, the metrics $g_i$ converge in $C^\infty$ to
  a Ricci-flat metric $g_\infty$.
\end{itemize}

The above regularity properties, except the Minkowski content
estimate, were also proved in the K\"ahler setting by
Cheeger-Colding-Tian \cite{CheegerColdingTian}. Note that when we only
assume nonnegative Ricci curvature, the tangent cones at infinity are
in general not unique \cite{Perelman} . However, if the metric is
Ricci-flat and if one tangent cone at infinity is smooth, then it is
unique by Colding-Minicozzi \cite{ColdingMinicozziIII}.

\emph{A remark on notations.}  In this paper we always use the Hodge
Laplacian $\Delta = dd^*+d^*d$ for arbitrary $k$-forms, including
$k=0$, so in particular the eigenvalues are nonnegative. A harmonic
$k$-form is a $k$-form $u$ such that $\Delta u = 0$. What we mean by
subquadratic is in the sense of $O(r^s)$ for some $s<2$. The same goes
for the term sublinear.

\hfill

\noindent\textbf{Acknowledgments.} This work grew out of my PhD thesis.
I would like to thank G\'abor Sz\'ekelyhidi for kindly sharing his
valuable insights and many helpful discussions over the years. I would
also like to thank the anonymous referees for very helpful
comments. Finally, I would like to thank the National Center for
Theoretical Sciences in Taipei, Taiwan for their warm hospitality
during summer 2019.

\section{Analysis on limit Ricci-flat cones}

Before moving on, we shall remark that all the results in this section
also hold for smooth Riemannian cones.

\begin{definition}
  We define a limit Ricci-flat cone $C(Y)$ to be a metric cone over a
  compact metric space $Y$, such that $C(Y)$ is itself a pointed
  Gromov-Hausdorff limit of a noncollapsing sequence of complete
  Riemannian manifolds $(M_i,p_i,g_i)$ with
  $|\mathrm{Ric}(g_i)| \le \epsilon_i \to 0$.
\end{definition}

In particular, a tangent cone at infinity of a Ricci-flat manifold
with maximal volume growth is a limit Ricci-flat cone. The
regularity results mentioned in the introduction enable us to extend
various results in the smooth case to our singular setting. We remark
that the condition $\ric(g_i) \to 0$ in the definition, though weaker
than the condition mentioned in the introduction, is strong enough to
ensure that the limit metric is smooth and Ricci-flat on the regular
set. This follows from elliptic regularity of the Ricci curvature
equation in harmonic coordinates. See for example the argument in the
proof of \cite[Main~Lemma~2.2]{AndersonConvergence}.

A key consequence of these regularity results is the existence of good
cutoff functions. First, let us note that a limit Ricci-flat cone
$C(Y)$ is an $\operatorname{RCD}^*(0,m)$-space and the cross section
$Y$ is an $\operatorname{RCD}^*(m-2,m-1)$-space \cite{Ketterer}, where
$m$ is the (real) dimension of $C(Y)$. $\operatorname{RCD}$ spaces are
generalizations of limits of Riemannian manifolds with Ricci curvature
bounded from below. See \cite{GigliLecture} for an introduction. We
have the following lemma.

\begin{lemma}[Mondino-Naber \cite{MondinoNaber}]
  \label{cheegercoldingcutoff}
  Let $X$ be a $\operatorname{RCD}^*(K,N)$-space for some $K \in \R$ and
  $N \in (1,\infty)$. Then for every $x \in X, R>0, 0<r<R$, there exists
  a Lipschitz function $\psi^r: X \to \R$ satisfying:
  \begin{itemize}
  \item $\operatorname{supp} \psi^r \subset B(x,r)$,
  \item $\psi^r = 1 $ on $B(p, r/2)$,
  \item $r^2|\Delta \psi^r|+r|\nabla \psi^r| \le C(K,N,R)$.
  \end{itemize}
\end{lemma}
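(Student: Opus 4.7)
The plan is to adapt the Cheeger--Colding construction of good cutoff functions (originally done on smooth manifolds with nonnegative Ricci curvature via Poisson equations on annuli) to the singular $\operatorname{RCD}^*(K,N)$ setting. The essential tools available on such spaces, which replace the smooth computations, are: Bishop--Gromov volume comparison and doubling, the Laplacian comparison for the distance function $d(x,\cdot)$ (in the measure-valued sense), Cheng--Yau/Bakry--\'Emery gradient estimates coming from the $\Gamma_2 \ge K\Gamma$ inequality, and the elliptic regularity theory for the canonical Dirichlet form. All of these have been developed for $\operatorname{RCD}^*(K,N)$ spaces in the work of Ambrosio--Gigli--Savar\'e and collaborators.

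The construction proceeds in three steps. First, on the annulus $A = B(x,r)\setminus B(x,r/2)$, solve a Dirichlet problem for a Poisson-type equation $\Delta G = c$ (with $c$ a constant chosen for normalization) and boundary values $G \equiv 1$ on the inner sphere, $G \equiv 0$ on the outer sphere. Laplacian comparison lets us bracket $G$ between explicit radial model functions, yielding $\|G\|_\infty \lesssim 1$ and $r^2\|\Delta G\|_\infty \lesssim C(K,N,R)$. Second, the crucial step is the pointwise gradient estimate $r|\nabla G| \le C(K,N,R)$; on a smooth manifold this follows from the Bochner identity applied to $|\nabla G|^2$ together with a maximum principle, and on an $\operatorname{RCD}^*(K,N)$ space it follows from the corresponding $\Gamma_2$-inequality coupled with the Bakry--\'Emery/heat-semigroup characterization of $L^\infty$ gradient bounds. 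Third, set $\psi^r := \chi \circ G$ for a fixed smooth bump $\chi\colon [0,1]\to [0,1]$ with $\chi \equiv 1$ near $1$ and $\chi \equiv 0$ near $0$, and extend by $1$ on $B(x,r/2)$ and by $0$ outside $B(x,r)$. The chain rule
\[
\nabla \psi^r = \chi'(G)\,\nabla G,\qquad \Delta\psi^r = \chi'(G)\,\Delta G + \chi''(G)\,|\nabla G|^2
\]
then gives precisely the desired bounds $r|\nabla \psi^r|+r^2|\Delta \psi^r|\le C(K,N,R)$.

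The main obstacle is the second step, namely upgrading the integrated Bochner inequality available in the RCD setting to a pointwise $L^\infty$ bound on $|\nabla G|$; this is the technical heart of Mondino--Naber and is where the non-smoothness of $X$ really enters. The rest of the argument is either comparison geometry applied in its measure-valued form or a routine chain-rule calculation for compositions of a Lipschitz function with a smooth one-variable bump, for which the Dirichlet form chain rule on $\operatorname{RCD}^*(K,N)$ spaces is standard.
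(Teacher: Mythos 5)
First, a point of comparison: the paper does not prove this lemma at all; it is imported as a black box from Mondino--Naber \cite{MondinoNaber}, so there is no in-paper argument to measure your sketch against. Your route is the classical Cheeger--Colding one (Poisson problem on an annulus, gradient estimate, composition with a one-variable bump), whereas Mondino--Naber's argument instead mollifies a Lipschitz ``tent'' function by the heat semigroup $P_t$ with $t\sim r^2$ and reads off the Lipschitz and Laplacian bounds from the Bakry--Ledoux gradient estimate and the mapping properties of $\Delta P_t$; that route deliberately avoids boundary-value problems, which is exactly where your version becomes delicate.

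Two concrete gaps remain in your sketch. (1) Prescribing $G\equiv 1$ on $\partial B(x,r/2)$ and $G\equiv 0$ on $\partial B(x,r)$ is not well posed as stated: metric spheres in an $\operatorname{RCD}^*(K,N)$ space need not be regular and may even carry positive measure. You would have to set the Dirichlet problem up variationally (minimize energy over $g_0+W^{1,2}_0(A)$ for a Lipschitz extension $g_0$ of the radial model function), and the two-sided bracketing of $G$ by radial comparison functions then requires a comparison principle for the measure-valued Laplacian of $d(x,\cdot)$, whose singular negative part has to be handled. (2) The step you yourself flag as the heart of the matter --- the pointwise bound $r|\nabla G|\le C(K,N,R)$ --- does not follow formally from the $\Gamma_2$-inequality together with the Bakry--\'Emery/semigroup characterization of gradient bounds: that characterization is global and does not localize to solutions of a Dirichlet problem on an annulus. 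What is actually needed is the local Yau/Cheng--Yau gradient estimate for harmonic (or Poisson) functions on $\operatorname{RCD}^*(K,N)$ spaces, which is a genuine theorem (due to R.~Jiang, and Zhang--Zhu in related settings) resting on the self-improvement of the Bochner inequality plus a Moser-type iteration, because $|\nabla G|^2$ is not a priori in the domain of the measure-valued Laplacian. As written, your proposal assumes its hardest ingredient; the remaining steps (chain rule for $\chi\circ G$, gluing by constants) are indeed routine.
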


Using the existence of good cutoff functions, we can construct cutoff
functions that allow us to do analysis on spaces with codimension $4$
singularities.

\begin{lemma}
  \label{goodcutoff}
  Let $C(Y)$ be a limit Ricci-flat cone. Denote $\Sigma$ the singular
  set of $C(Y)$. Fix $p \in C(Y)$. Then for any $\epsilon >0$, there
  exists a cutoff function $\phi_\epsilon$ on $B(p,1)$ such that:
  \begin{itemize}
  \item $\operatorname{supp} \phi_\epsilon \subset \Sigma^{\epsilon}$,
    where $\Sigma^\epsilon$ is the $\epsilon$-neighborhood of $\Sigma$,
  \item $\phi_\epsilon = 1$ in a neighborhood of $\Sigma$,
  \item $\|\nabla\phi_\epsilon\|_{L^2} \to 0$ as $\epsilon \to 0$,
  \item $\|\Delta\phi_\epsilon\|_{L^1} \to 0$ as $\epsilon \to 0$,
  \item $\|\Delta\phi_\epsilon\|_{L^2} < C$ for some constant $C>0$
    not depending on $\epsilon$.
  \end{itemize}
  The exact same construction also holds on the cross section $Y$.
\end{lemma}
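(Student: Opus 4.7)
The plan is to glue local Mondino-Naber cutoffs provided by Lemma \ref{cheegercoldingcutoff} along a Vitali-type covering of the singular set at scale $\epsilon$, using the sharp Minkowski content bound
\begin{equation*}
\vol(\Sigma^\epsilon \cap B(p,1)) \le C_0 \epsilon^4
\end{equation*}
to balance the pointwise blow-up of the derivatives. This content estimate is the noncollapsed codimension-$4$ bound of Jiang-Naber \cite{JiangNaber} recalled in the introduction, and is the essential quantitative input behind every estimate below.

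Concretely, I would first pick a maximal $\epsilon$-separated net $\{x_i\}_{i=1}^N \subset \Sigma \cap B(p,3/4)$, so that the balls $B(x_i,2\epsilon)$ cover $\Sigma\cap B(p,3/4)$ while the balls $B(x_i,\epsilon/2)$ are pairwise disjoint and contained in $\Sigma^{\epsilon/2}$. Volume comparison on the $\operatorname{RCD}^*$-space $C(X)$ gives $\vol(B(x_i,\epsilon/2))\ge c\epsilon^m$ with $m=\dim C(X)$, which, combined with the content bound, controls both $N$ and the overlap multiplicity of the enlarged balls $B(x_i,4\epsilon)$ by a constant $K=K(m)$. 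Applying Lemma \ref{cheegercoldingcutoff} at each $x_i$ at scale $r=4\epsilon$ produces $\psi_i$ with $\psi_i\equiv 1$ on $B(x_i,2\epsilon)$, $\operatorname{supp}\psi_i\subset B(x_i,4\epsilon)$, $|\nabla\psi_i|\le C/\epsilon$ and $|\Delta\psi_i|\le C/\epsilon^2$. I then set
\begin{equation*}
\phi_\epsilon \;=\; \eta\Big(\sum_i \psi_i\Big),
\end{equation*}
where $\eta\colon[0,\infty)\to[0,1]$ is smooth with $\eta(0)=0$ and $\eta\equiv 1$ on $[1,\infty)$. By construction $\phi_\epsilon\equiv 1$ on a neighborhood of $\Sigma\cap B(p,3/4)$, $\operatorname{supp}\phi_\epsilon\subset\Sigma^{4\epsilon}$, and the chain rule combined with bounded overlap gives the pointwise bounds $|\nabla\phi_\epsilon|\le C/\epsilon$ and $|\Delta\phi_\epsilon|\le C/\epsilon^2$.

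The four $L^p$ properties then drop out immediately by integrating these pointwise bounds against $\vol(\Sigma^{4\epsilon}\cap B(p,1))\le C\epsilon^4$: both $\|\nabla\phi_\epsilon\|_{L^2}^2$ and $\|\Delta\phi_\epsilon\|_{L^1}$ are bounded by $C\epsilon^{-2}\cdot\epsilon^4=C\epsilon^2\to 0$, while $\|\Delta\phi_\epsilon\|_{L^2}^2$ is bounded by $C\epsilon^{-4}\cdot\epsilon^4=C$, independent of $\epsilon$. The construction on the cross section $X$ is verbatim, since the singular set of $X$ has the same codimension in $X$ as $\Sigma$ does in $C(X)$ and $X$ is itself an $\operatorname{RCD}^*$-space to which Lemma \ref{cheegercoldingcutoff} applies. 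The main obstacle is precisely the uniform $L^2$ bound on $\Delta\phi_\epsilon$: it sits exactly at the borderline where $\epsilon^{-4}$ matches the codimension-$4$ content, so a weaker Minkowski statement with an $\epsilon^{-\delta}$ loss would only yield $\|\Delta\phi_\epsilon\|_{L^2}^2\le C\epsilon^{-\delta}$ and destroy the needed uniform bound. Thus the argument rests genuinely on the sharp Jiang-Naber content estimate and not merely on a Hausdorff codimension statement.
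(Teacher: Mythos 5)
Your construction is correct, and it is the ``standard'' argument that the paper simply waves at by citing Donaldson--Sun without giving details: a single-scale Vitali cover of $\Sigma$ by $\epsilon$-balls, Mondino--Naber cutoffs on each ball with $|\nabla\psi_i|\le C/\epsilon$ and $|\Delta\psi_i|\le C/\epsilon^2$, bounded overlap from doubling and noncollapsing, and then integration of the pointwise bounds over the tube $\Sigma^{C\epsilon}$. One point in your write-up deserves emphasis because it is where your argument genuinely goes beyond the cited reference. The construction in Donaldson--Sun is multi-scale: it covers $\Sigma$ by balls $B(x_i,r_i)$ of varying radii with $\sum_i r_i^{m-2}<\epsilon$, which is all that is needed there and yields the support, gradient-$L^2$ and Laplacian-$L^1$ properties, but it gives no control on $\norm{\Delta\phi_\epsilon}_{L^2}$ (that norm scales like $\sum_i r_i^{m-4}$, which the capacity hypothesis does not control). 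The uniform $L^2$ bound on $\Delta\phi_\epsilon$ demanded by the lemma forces the single-scale covering together with the sharp codimension-$4$ Minkowski content estimate $\vol(\Sigma^\epsilon\cap B(p,1))\le C\epsilon^4$ of Jiang--Naber, exactly as you observe; with only the $\epsilon^{4-\eta}$ version the last bullet would fail. So your proof is not merely a transcription of the reference but correctly identifies and supplies the extra quantitative input. Two cosmetic remarks: relabel so that the support lands in $\Sigma^\epsilon$ rather than $\Sigma^{4\epsilon}$, and take the net in $\Sigma\cap B(p,2)$ (rather than $B(p,3/4)$) so that $\phi_\epsilon=1$ on a full neighborhood of $\Sigma\cap \overline{B(p,1)}$ and the transition region stays a definite distance from all of $\Sigma$, which is what justifies doing the chain-rule computation on the smooth regular part.
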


\begin{proof}
  The proof follows from a standard covering argument. See for example
  the proof of Proposition 3.5 in \cite{DonaldsonSunI}.
\end{proof}

We briefly recall some basic facts about the geometry of cones. Let
$C(Y)$ be a limit Ricci-flat cone of dimension $m$. Thanks to the
regularity results and the existence of good cutoff functions, we can
do analysis on the regular set $\mathcal{R}$ of $C(Y)$ or the regular
set of $Y$ as in the smooth case. In the rest of this paper, unless
otherwise stated, tensors, as well as operators acting on them, are
defined on the regular set $\mathcal{R}$. The Hodge Laplacian
$\Delta: \Omega^k(\cR) \to \Omega^k(\cR)$ is defined as
\begin{equation*}
  \Delta = d^*d+dd^*.
\end{equation*}

Recall the Bochner formula. Let $\omega$ be a differential
$k$-form. Then we have
\begin{equation*}
  \Delta \omega = \nabla^*\nabla \omega + \mathfrak{R}(\omega),
\end{equation*}
where $\mathfrak{R}$ is a $0$th-order self-adjoint differential
operator defined using the Riemann curvature tensor. If $k=1$, then
$\mathfrak{R}=\operatorname{Ric}$ is just the Ricci tensor.

Let $i,j,k$, etc. denote the indices of local coordinates on the
regular part of the cross section $Y$. The coordinate vector fields
$\partial_i$ on $Y$ extend trivially to vector fields on $\cR$. The
cone metric on $\cR$ is given by $g_{C(Y)} = dr^2 + r^2 g_Y$. Here $r$
is the radial coordinate. The Christoffel symbols of $g_{C(Y)}$ with
respect to these coordinates can be calculated as
\begin{align*}
  \Gamma_{ij}^k &= (\Gamma^Y)_{ij}^k, \\
  \Gamma_{ir}^k &= \frac{1}{r}\delta_i^k, \\
  \Gamma^r_{ij} &= -rg^Y_{ij},\\
  \Gamma_{ir}^r&=\Gamma_{rr}^i=\Gamma_{rr}^r= 0.
\end{align*}

Using these, the Riemann curvature tensor can be calculated as
follows:
\begin{align*}
  R(\partial_i, \partial_r) &= 0, \\
  R(\partial_i, \partial_j)\partial_k &= R^Y(\partial_i,\partial_j)\partial_k-g^Y_{jk}\partial_i+g^Y_{ik}\partial_j, \\
  R(\partial_i, \partial_j)\partial_r &= 0.
\end{align*}

We note that the Riemann curvature tensor is homogeneous of degree
$-2$:
\begin{align*}
  \nabla_{r\partial_r} R = -2 R.
\end{align*}

This follows from the fact that
\begin{align*}
  \nabla_{r\partial_r} \partial_i &= \partial_i, \\
  \nabla_{r\partial_r} dx^i &= -dx^i.
\end{align*}

We need the following definition:

\begin{definition}
  Let $C(Y)$ be a limit Ricci-flat cone, and let $\cR$ denote the
  regular set of $C(Y)$. Let $u$ be a harmonic $k$-form on $\cR$. We say
  $u$ is locally $L^p$ (resp. locally $W^{1,2}$) and write
  $u \in L^p_{loc}$ (resp. $u \in W^{1,2}_{loc}$) if for any
  $p \in C(Y)$, regular or not, and for any $r > 0$, we have
  \begin{align*}
    \int_{B(p,r)\cap \cR} |u|^p < \infty
  \end{align*}
  \begin{align*}
    (\text{resp. }\int_{B(p,r)\cap \cR} |u|^2 + \int_{B(p,r)\cap \cR}
    |\nabla u|^2 &< \infty).
  \end{align*}
  For $k$-forms on a smooth Riemannian cone (not necessarily
  Ricci-flat), the same definition follows.
\end{definition}

If the curvature operator $\mathfrak{R}$ on $C(Y)$ is nonnegative,
then by the Bochner formula $|u|$ is subharmonic for any harmonic
$k$-form $u$. We can then deduce locally $L^\infty$ from locally $L^1$
or locally $L^2$ by the following mean value inequality on limit
Ricci-flat cones.

\begin{prop}
  \label{linfinity}
  Let $C(Y)$ be a limit Ricci-flat cone of dimension $m$. Then there
  exists a constant $C>0$ depending on $m$ such that the following
  holds. Let $f \in L^2_{loc} \cap C^\infty(\cR)$ be a subharmonic
  function, i.e. $-\Delta f \ge 0$ on $\cR$. Then for any $x$ in $\cR$,
  we have
  \begin{align*}
    f(x) \le C \dashint_{B(x,1)} |f| \le C\sqrt{\dashint_{B(x,1)} f^2}.
  \end{align*}
\end{prop}

\begin{proof}
  For a proof on smooth manifolds, see
  \cite[Theorem~1.2]{LiSchoen}. Fix $x \in \cR$ a regular point. Let
  $H(x,y,t)$ be the heat kernel on $C(Y)$. Since $H$ is the uniform
  limit of the heat kernels on manifolds along the
  sequence~\cite{ding2002heat}, $H$ satisfies the Gaussian bounds and
  time derivative bounds in \cite[Theorem~5.4.12]{SaloffCoste}. Using
  the Li-Yau inequality~\cite{LiYau}, we also obtain a Gaussian upper
  bound for $|\nabla H|$. In sum, we have bounds:
  \begin{equation*}
    |H(x,y,t)| + \sqrt{t}|\nabla_yH(x,y,t)| \le
    \frac{C'}{\vol(B(x,\sqrt{t}))}\exp\left(-\frac{c' d(x,y)^2}{t}\right)
  \end{equation*}
  for $t \in (0,1]$. Here $C',c'>0$ are dimensional constants. Let
  $\eta$ be a cutoff function on $B(x,1)$ given in Lemma
  \ref{cheegercoldingcutoff} with $r=1/2$. Recall that
  $\|\nabla\eta\|_{L^\infty}, \|\Delta\eta\|_{L^\infty} < C$ where
  $C>0$ depends only on the dimension $m$. Now we compute
  \begin{equation}
    \label{eq:27}
    \begin{aligned}
      \frac{\partial}{\partial t}\int_{B(x,1)} Hf\eta
      &= \int_{B(x,1)} (-\Delta H)f\eta \\
      &= \int_{B(x,1)} -f\Delta(H\eta) + fH\Delta\eta - 2f\nabla\eta \cdot \nabla H \\
      &\ge -\int_{B(x,1)} f\Delta(H\eta)-C''\dashint_{B(x,1)}|f|,
    \end{aligned}
  \end{equation}
  where $C'' > 0$ is a dimensional constant. Here in the last inequality
  we have used the uniform bounds for $\nabla\eta, \Delta\eta,H$ and
  $\nabla H$ on the annulus $A(x,1/2,1)$, which contains the support of
  $\nabla\eta$ and $\Delta\eta$, and volume comparison. It remains to
  conclude that
  \begin{equation}
    \label{eq:28}
    -\int_{B(x,1)} f\Delta(H\eta) \ge 0
  \end{equation}
  for all $t>0$. Let $\phi_\epsilon$ be the cutoff function given in
  Lemma~\ref{goodcutoff}, and set $\psi_\epsilon =
  (1-\phi_\epsilon)$. Recall that $\|\nabla\psi\|_{L^2} \to 0$ and
  $\|\Delta\psi\|_{L^2} \le C$ for a constant $C$ independent of
  $\epsilon$. We compute
  \begin{align*}
    -\int_{B(x,1)} \psi_\epsilon f\Delta(H\eta)
    &= -\int_{B(x,1)} f\Delta(\psi_\epsilon H\eta)
    - \int_{B(x,1)} fH\eta\Delta\psi_\epsilon
    - 2\int_{B(x,1)} f\nabla(H\eta)\cdot\nabla\psi_\epsilon \\
    &\ge
    - \int_{B(x,1)} fH\eta\Delta\psi_\epsilon
    - 2\int_{B(x,1)} f\nabla(H\eta)\cdot\nabla\psi_\epsilon
  \end{align*}
  Here in the second inequality we perform an integration by parts and
  use the fact that $-\Delta f \ge 0$. Using the Cauchy-Schwarz
  inequality and the properties of $\psi_\epsilon$, we conclude
  \eqref{eq:28} by letting $\epsilon \to 0$. Finally, integrating
  \eqref{eq:27} on $[0,1]$ and using the Gaussian upper bound again,
  we get
  \begin{align*}
    C'\dashint_{B(x,1)}f \ge \int_{B(x,1)} H(x,y,1)f\eta \ge f(x) -C''\dashint_{B(x,1)}|f|.
  \end{align*}
  This concludes the proof.
\end{proof}

\begin{prop}
  \label{w12}
  On a limit Ricci-flat cone $C(Y)$ with nonnegative curvature operator
  $\mathfrak{R}$, a harmonic $k$-form which is locally $L^\infty$ is locally
  $W^{1,2}$.
\end{prop}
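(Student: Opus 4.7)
The plan is to run the standard Bochner/Weitzenböck plus integration-by-parts argument on the regular set $\cR$, using the singular cutoff from Lemma \ref{goodcutoff} in order to justify the integration by parts on $C(X)$ even though $u$ is only defined on $\cR$.

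With the paper's sign convention $\Delta = dd^* + d^*d$, the Weitzenböck identity for a harmonic $k$-form $u$ on $\cR$ reads
\begin{equation*}
\Delta |u|^2 = -2|\nabla u|^2 - 2\langle \mathfrak{R}(u), u\rangle.
\end{equation*}
Since $\mathfrak{R} \ge 0$, this gives the pointwise inequality $-\Delta |u|^2 \ge 2|\nabla u|^2$ on $\cR$. Fix $p \in C(X)$ and $r > 0$, and let $\eta$ be a Cheeger-Colding-Mondino-Naber cutoff from Lemma \ref{cheegercoldingcutoff} with $\eta = 1$ on $B(p,r)$ and $\operatorname{supp} \eta \subset B(p,2r)$, and let $\phi_\epsilon$ be the singular cutoff from Lemma \ref{goodcutoff}. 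Then $\psi_\epsilon := \eta(1-\phi_\epsilon)$ is Lipschitz and compactly supported in the regular set $\cR$, so the integration-by-parts
\begin{equation*}
\int_{\cR} \psi_\epsilon^2 (-\Delta |u|^2) = -2\int_{\cR} \psi_\epsilon \,\nabla \psi_\epsilon \cdot \nabla |u|^2
\end{equation*}
is justified on the smooth manifold $\cR$.

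Combining this with the Bochner inequality, Kato's inequality $|\nabla |u|^2| \le 2|u|\,|\nabla u|$, and Cauchy-Schwarz with absorption, I would obtain the weighted Caccioppoli estimate
\begin{equation*}
\int_{\cR} \psi_\epsilon^2 |\nabla u|^2 \le 4\int_{\cR} |\nabla \psi_\epsilon|^2 |u|^2.
\end{equation*}
The right-hand side is bounded by $C \bigl(\int |\nabla \eta|^2 |u|^2 + \|u\|_{L^\infty(B(p,2r))}^2 \|\nabla \phi_\epsilon\|_{L^2}^2\bigr)$. Since $u$ is locally $L^\infty$ by hypothesis, the first piece is finite independently of $\epsilon$, while Lemma \ref{goodcutoff} forces the second piece to vanish as $\epsilon \to 0$. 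Letting $\epsilon \to 0$ and invoking Fatou (using that $\psi_\epsilon \nearrow \eta$ pointwise on $\cR$) yields $\int_{B(p,r) \cap \cR} |\nabla u|^2 < \infty$, which is the desired local $W^{1,2}$ bound.

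The only real obstacle is the singular set $\Sigma$: one cannot integrate by parts naively on $C(X)$, and the Bochner inequality is only available on $\cR$. This is precisely what the cutoff $\phi_\epsilon$ from Lemma \ref{goodcutoff} is designed to handle, and its control $\|\nabla \phi_\epsilon\|_{L^2} \to 0$ (which in turn uses the codimension-$4$ Minkowski estimate on $\Sigma$) is exactly strong enough to make the boundary contribution along $\Sigma$ disappear in the limit. No hypothesis on $\Delta \phi_\epsilon$ is needed here because the Bochner identity is used in its pre-integrated form and no term involving $\Delta \phi_\epsilon$ appears after the single integration by parts.
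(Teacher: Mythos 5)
Your proof is correct and follows essentially the same route as the paper: the Bochner inequality $-\Delta|u|^2 \ge 2|\nabla u|^2$ on the regular set combined with the cutoffs of Lemmas \ref{cheegercoldingcutoff} and \ref{goodcutoff}. The only difference is cosmetic: the paper integrates by parts twice so that the whole Laplacian lands on the cutoff and hence needs the $\Delta\phi_\epsilon$ bounds, whereas you integrate by parts once and absorb the $|\nabla u|$ term, so that only $\norm{\nabla\phi_\epsilon}_{L^2}\to 0$ is used; the absorption is legitimate because $\psi_\epsilon$ is compactly supported in the regular set, making $\int \psi_\epsilon^2 |\nabla u|^2$ finite a priori.
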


\begin{proof}
  Let $\eta$ be a cutoff function given in Lemma
  \ref{cheegercoldingcutoff} on $B_2$ such that $\eta = 1$ on $B_1$. Let
  $\phi_\epsilon$ be a cutoff function given in Lemma
  \ref{goodcutoff}. For a locally $L^\infty$ harmonic $k$-form $u$,
  \begin{align*}
    \int_{B_1} (1-\phi_\epsilon) |\nabla u|^2
    &\le \int_{B_2} \eta (1-\phi_\epsilon) |\nabla u|^2 \\
    &\le \int_{B_2} -\frac{1}{2}\eta (1-\phi_\epsilon) \Delta |u|^2 \\
    &= \int_{B_2} -\frac{1}{2}[(1-\phi_\epsilon)\Delta \eta + 2\nabla \eta \cdot
    \nabla\phi_{\epsilon} - \eta \Delta\phi_{\epsilon}] |u|^2 \\
    &\le C\int_{B_2} (1-\phi_\epsilon) |u|^2
    + C\int_{B_2} |\nabla \phi_\epsilon| |u|^2
    + C\int_{B_2} |\Delta\phi_\epsilon| |u|^2.
  \end{align*}
  By the Cauchy-Schwarz inequality and locally $L^\infty$ assumption on
  $u$, the last two terms on the right hand side vanish as
  $\epsilon \to 0$.
\end{proof}

The following is needed in order to bound $|\nabla_{r\partial_r} u|$.

\begin{prop}
  \label{subharmonic}
  Let $C(Y)$ be a limit Ricci-flat cone with nonnegative curvature
  operator $\mathfrak{R}$. Then $|\nabla_{r\partial_r}u|$ is subharmonic
  for any harmonic $k$-form $u$ on $C(Y)$.
\end{prop}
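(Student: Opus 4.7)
The plan is to show that $w := \nabla_{r\partial_r} u$ is itself a harmonic $k$-form on $C(X)$, and then apply the Weitzenb\"ock--Bochner formula together with Kato's inequality to conclude that $|w|$ is subharmonic. The whole argument rests on the homothetic symmetry of the cone.

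First I would exploit the dilation family $\delta_\lambda(r,x) = (\lambda r, x)$, which satisfies $\delta_\lambda^* g = \lambda^2 g$. Under a constant rescaling of the metric the Levi-Civita connection is unchanged, and both the connection Laplacian $\nabla^*\nabla$ and the curvature operator $\mathfrak{R}$ on $k$-forms scale homogeneously of degree $-2$, so $\Delta_{\lambda^2 g} = \lambda^{-2}\Delta_g$. Combined with naturality $\delta_\lambda^*\Delta_g = \Delta_{\delta_\lambda^* g}\delta_\lambda^*$, this yields $\Delta_g(\delta_\lambda^* u) = \lambda^2 \delta_\lambda^*(\Delta_g u)$. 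Differentiating at $\lambda = 1$ gives $[\Delta_g, \cL_{r\partial_r}]\, u = 2\Delta_g u$, so if $u$ is harmonic then so is $\cL_{r\partial_r} u$.

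To pass from $\cL_{r\partial_r}$ to $\nabla_{r\partial_r}$, I use the defining property of the Riemannian cone that $\nabla_Y(r\partial_r) = Y$ for every tangent vector $Y$. Substituting this into the standard tensorial formula for $\cL_X - \nabla_X$ on $k$-forms gives the pointwise identity $\cL_{r\partial_r} u = \nabla_{r\partial_r} u + k\, u$, so $w = \cL_{r\partial_r} u - k u$ is a difference of two harmonic $k$-forms and hence harmonic itself. Since $\Delta w = 0$, the Weitzenb\"ock identity gives $\nabla^*\nabla w = -\mathfrak{R}(w)$, and the standard Bochner computation
\begin{equation*}
\tfrac12 \Delta |w|^2 = -|\nabla w|^2 - \langle \mathfrak{R}(w), w\rangle \le 0
\end{equation*}
(in the Hodge sign convention of the paper, and using $\mathfrak{R}\ge 0$) combined with Kato's inequality $|\nabla |w|| \le |\nabla w|$ yields $\Delta |w| \le 0$ on $\{w \ne 0\}$; subharmonicity extends across $\{w = 0\}$ via the usual approximation $\sqrt{|w|^2 + \varepsilon^2}$ and letting $\varepsilon \to 0$. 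The main obstacle is really the scaling identity for the Hodge Laplacian on $k$-forms, but this is routine once one notes that a constant conformal rescaling preserves the Levi-Civita connection and rescales both pieces of the Weitzenb\"ock formula by the same factor $\lambda^{-2}$.
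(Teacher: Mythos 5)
Your proof is correct, and it ends the same way the paper does — with the Bochner identity for $w=\nabla_{r\partial_r}u$ plus Kato's inequality — but it reaches the key intermediate fact by a genuinely different route. The paper's proof is a one-line appeal to ``a straightforward calculation'' using the Bochner formula and the cone structure: in practice this means commuting $\nabla^*\nabla$ (equivalently $\Delta$) past $\nabla_{r\partial_r}$ directly, using $\nabla_Y(r\partial_r)=Y$ and the homogeneity $\nabla_{r\partial_r}R=-2R$ recorded earlier in Section~2, and the displayed identity $-\Delta|w|^2=2|\nabla w|^2+2\mathfrak{R}(w,w)$ is precisely the statement that $w$ satisfies the harmonic-form Bochner identity. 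You instead make the harmonicity of $w$ explicit and derive it from symmetry: the dilation equivariance $\Delta_g(\delta_\lambda^*u)=\lambda^2\delta_\lambda^*(\Delta_g u)$ differentiates to $[\Delta,\cL_{r\partial_r}]u=2\Delta u$, so $\cL_{r\partial_r}u$ is harmonic, and the cone identity $\nabla_Y(r\partial_r)=Y$ together with torsion-freeness gives $\cL_{r\partial_r}u=\nabla_{r\partial_r}u+ku$, whence $w$ is a difference of harmonic forms. All of these steps check out (the scaling $\Delta_{\lambda^2 g}=\lambda^{-2}\Delta_g$ on $k$-forms follows already from the behaviour of $d^*$ under constant rescaling, without needing the Weitzenb\"ock decomposition). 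What your route buys is a conceptual, computation-free proof that $\nabla_{r\partial_r}u$ is itself a harmonic $k$-form — a fact the paper uses only implicitly and which is consistent with the homogeneous decomposition in Lemma~\ref{homogeneous1} — at the cost of invoking naturality and scaling of the Hodge Laplacian; the paper's direct tensor computation is more self-contained but leaves the details to the reader. Both arguments then conclude identically: nonnegativity of $\mathfrak{R}$ and Kato's inequality (the paper's ``Cauchy--Schwarz'') give subharmonicity of $|w|$ where $w\ne0$, extended across the zero set by the standard $\sqrt{|w|^2+\varepsilon^2}$ regularisation.
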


\begin{proof}
  Using the Bochner formula and the cone structure of $C(Y)$, by a
  straightforward calculation, we get
  \begin{align*}
    -\Delta |\nabla_{r\partial_r} u|^2 =
    2\mathfrak{R}(\nabla_{r\partial_r} u, \nabla_{r\partial_r} u) +
    2|\nabla \nabla_{r\partial_r} u|^2 \ge 2|\nabla |\nabla_{r\partial_r} u||^2.
  \end{align*}
  Now set $h = |\nabla_{r\partial_r}u|$. From the above we have
  \begin{align*}
    -\Delta h^2 = 2|\nabla h|^2  -2h\Delta h \ge 2|\nabla h|^2,
  \end{align*}
  and so $-h\Delta h \ge 0$. Let $p \in \cR$. If $h(p)=0$, then
  necessarily $-\Delta h (p) \ge 0$, for $p$ is a minimum. If
  $h(p) > 0$, then we also have $-\Delta h(p) \ge 0$. The result
  follows.
\end{proof}

\begin{lemma}
  \label{nablar}
  Let $C(Y)$ be a limit Ricci-flat cone and let $u$ be a locally
  $L^\infty$ harmonic $1$-form. Then locally
  $\sup |\nabla_{r\partial_r} u| < \infty$.
\end{lemma}

\begin{proof}
  This follows from Proposition~\ref{linfinity}, Proposition~\ref{w12}
  and Proposition~\ref{subharmonic}.
\end{proof}

\section{Monotonicity of the frequency function}

Fix for now a (smooth) Riemannian cone $C(Y)$ of real dimension
$m \ge 2$. Later on, we will generalize what we obtained for smooth
cones to limit Ricci-flat cones using cutoff functions. In this
section, $B_r$ will denote the open ball of radius $r$ centered at the
vertex of $C(Y)$, and $\partial B_r = \{r\} \times Y$ will denote the
boundary of $B_r$. Let $u$ be a harmonic $k$-form such that locally
$\sup |u| + \sup |\nabla_{r\partial_r}u| < \infty$. Define
\begin{equation*}
  D(r) = \int_{B_r} |\nabla u|^2 + \mathfrak{R}(u,u).
\end{equation*}
and
\begin{equation*}
  H(r) = \int_{\partial B_r} |u|^2.
\end{equation*}
In analogy with the frequency of harmonic functions (\cite{Han}), we
define
\begin{equation*}
  N(r) = \frac{rD(r)}{H(r)}
\end{equation*}
whenever $H(r) > 0$. $N(r)$ is called the frequency function of $u$.

By the Bochner formula,
\begin{equation*}
  \frac{1}{2}\nabla \cdot \nabla |u|^2 = |\nabla u|^2 + \mathfrak{R}(u,u).
\end{equation*}
So
\begin{equation}
  \label{dalternative}
  D(r) = \frac{1}{2} \int_{B_r} \nabla \cdot \nabla |u|^2 = \int_{\partial B_r} u \cdot \nabla_{\partial_r} u,
\end{equation}
Note that unlike in the $\R^m$ case, the vertex of the cone is a
singular point. To justify the integration by parts in
\eqref{dalternative}, we can cut out a small ball centered at the
vertex of the cone and then letting the radius of the small ball go to
$0$. Hence we need the assumption that locally
$\sup |u| + \sup |\nabla_{r\partial_r} u| < \infty$.

We now prove the monotonicity of frequency functions.

\begin{prop}
  \label{monotonicity}
  Let $C(Y)$ be a smooth Riemannian cone of dimension $m \ge 2$, let $u$
  be a harmonic $k$-form on $C(Y)$ such that locally
  $\sup |u| + \sup |\nabla_{r\partial_r}u| < \infty$, and let $N(r)$ be
  the frequency function of $u$. Then $N'(r) \ge 0$. $N(r)$ is constant
  if and only if
  \begin{equation*}
    \nabla_{r\partial_r} u = h(r)u
  \end{equation*}
  for some function $h(r)$.
\end{prop}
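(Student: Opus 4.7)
The plan is to establish two derivative identities,
\[
H'(r) = \tfrac{m-1}{r}H(r) + 2D(r), \qquad D'(r) = \tfrac{m-2}{r}D(r) + 2\int_{\partial B_r}|\nabla_{\partial_r}u|^2,
\]
and combine them with the Cauchy--Schwarz inequality on $\partial B_r$. The first identity is a routine polar-coordinate computation: parametrizing $\partial B_r = \{r\}\times X$ so that its induced volume form is $r^{m-1}\,d\mathrm{vol}_{g_X}$, one writes $H(r) = r^{m-1}\int_X |u(r,\cdot)|^2\,d\mathrm{vol}_{g_X}$ and differentiates using $\partial_r|u|^2 = 2\langle u, \nabla_{\partial_r}u\rangle$.

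The hard step is the second identity, a Pohozaev/Rellich-type formula for harmonic $k$-forms on a cone. My plan would be to apply the divergence theorem to the scaling vector field $V = r\partial_r$ on $B_r$. Two structural facts about the cone will be crucial: that $V$ is a homothety, satisfying $\nabla V = g$ as a $(0,2)$-tensor, and that the curvature operator $\mathfrak{R}$ is homogeneous of degree~$-2$, a consequence of the Riemann tensor being degree~$-2$ on the cone (as already recorded in the paper). Combined with harmonicity written in the Bochner form $\nabla^*\nabla u + \mathfrak{R}(u) = 0$, a direct integration by parts rearranges the interior terms so that the curvature contributions organize into the factor $(m-2)D(r)$, while the boundary contributions produce $2r\int_{\partial B_r}|\nabla_{\partial_r}u|^2$. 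This calculation is the main obstacle; it is formally parallel to the classical Almgren--Pohozaev identity for harmonic functions on Euclidean space, with $|\nabla u|^2$ replaced by the Bochner density $|\nabla u|^2 + \mathfrak{R}(u,u)$, and the homogeneity of $\mathfrak{R}$ is precisely what is needed to make the substitution go through cleanly.

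Granted these two identities, logarithmic differentiation of $N(r) = rD(r)/H(r)$ yields
\[
\frac{N'(r)}{N(r)} = \frac{1}{r} + \frac{D'(r)}{D(r)} - \frac{H'(r)}{H(r)} = \frac{2}{D(r)}\left(\int_{\partial B_r}|\nabla_{\partial_r}u|^2 - \frac{D(r)^2}{H(r)}\right),
\]
and the Cauchy--Schwarz inequality
\[
D(r)^2 = \left(\int_{\partial B_r}\langle u, \nabla_{\partial_r}u\rangle\right)^2 \le H(r)\int_{\partial B_r}|\nabla_{\partial_r}u|^2
\]
gives $N'(r)\ge 0$. For the equality case, $N'\equiv 0$ forces equality in the integral Cauchy--Schwarz on every $\partial B_r$, which happens precisely when $\nabla_{\partial_r}u(r,\theta) = \lambda(r)\,u(r,\theta)$ pointwise on $\partial B_r$ with $\lambda(r)$ independent of $\theta\in X$; setting $h(r) = r\lambda(r)$ then gives $\nabla_{r\partial_r}u = h(r)u$, as claimed.
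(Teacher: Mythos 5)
Your proposal is correct and follows essentially the same route as the paper: the same two derivative identities for $H$ and $D$ (the latter via the divergence of the Bochner density contracted with $r\partial_r$, using the degree $-2$ homogeneity of $\mathfrak{R}$ and harmonicity in Bochner form to turn the interior terms into $2r\int_{\partial B_r}|\nabla_{\partial_r}u|^2$), followed by the identity $D(r)=\int_{\partial B_r}\langle u,\nabla_{\partial_r}u\rangle$ and Cauchy--Schwarz on $\partial B_r$, with the equality case read off in the same way. The only difference is one of presentation: you invoke the Pohozaev-type identity at the level of a plan, while the paper carries out the divergence computation explicitly in normal coordinates.
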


The following proof is a slight modification of the case of harmonic
functions on $\R^m$. See \cite{Han} for comparison.

\begin{proof}
  By direct differentiation,
  \begin{equation*}
    N'(r) = N(r) \left\{\frac{1}{r} + \frac{D'(r)}{D(r)} - \frac{H'(r)}{H(r)}\right\}.
  \end{equation*}
  So our goal is to show that
  \begin{equation*}
    \frac{1}{r} + \frac{D'(r)}{D(r)} - \frac{H'(r)}{H(r)} \ge 0.
  \end{equation*}
  First we calculate $D'(r)$.
  \begin{align*}
    D'(r) &= \int_{\partial B_r} |\nabla u|^2+\mathfrak{R}(u,u)
    = \frac{1}{r} \int_{\partial B_r} \left(|\nabla u|^2+\mathfrak{R}(u,u)\right)r\partial_r \cdot \partial_r \\
    &= \frac{1}{r} \int_{B_r} \nabla \cdot \left(|u|^2+\mathfrak{R}(u,u)\right)r\partial_r.
  \end{align*}
  The divergence inside the integral can be calculated as follows. Fix
  normal coordinates on $Y$, and denote the indices of the normal
  coordinates by $i,j$. In particular, $\partial_r, r^{-1}\partial_i$
  form a local orthonormal frame on $C(Y)$. We have
  \begin{align*}
    \nabla \cdot \left(|\nabla u|^2+\mathfrak{R}(u,u)\right)r \frac{\partial}{\partial r}
    &= \frac{1}{\sqrt{\det{g}}}\partial_r\left(\sqrt{\det{g}} \;\left(|\nabla u|^2+\mathfrak{R}(u,u)\right) r\right) \\
    &= m\left(|\nabla u|^2+\mathfrak{R}(u,u)\right) + r\partial_r\left(|\nabla u|^2+\mathfrak{R}(u,u)\right).
  \end{align*}
  For the third term on the right hand side, we have
  \begin{align*}
    r\partial_r |\nabla u|^2 &= r\partial_r\left(|\nabla_{\partial_r}u|^2+r^{-2}|\nabla_i u|^2\right) \\
    &=2 r\nabla_{\partial_r}\nabla_{\partial_r}u \cdot \nabla_{\partial_r}u -2\left(|\nabla u|^2 - |\nabla_{\partial_r}u|^2\right) +
    2r^{-2}\nabla_{r\partial_r}\nabla_iu \cdot \nabla_i u \\
    &=2 \nabla_{\partial_r}\nabla_{r\partial_r}u \cdot \nabla_{\partial_r}u -2|\nabla u|^2 +
    2r^{-2}\nabla_{r\partial_r}\nabla_iu \cdot \nabla_i u.
  \end{align*}
  Using the fact that
  $\nabla_{\partial_r}\nabla_i = \nabla_i\nabla_{\partial_r}$ and
  combining the first and third term on the right hand side, we get
  \begin{align*}
    r\partial_r |\nabla u|^2 = 2 \nabla \nabla_{r\partial_r}u \cdot \nabla u -2|\nabla u|^2.
  \end{align*}

  By homogeneity of the curvature tensor, we also have
  \begin{align*}
    r\partial_r\mathfrak{R}(u,u) = -2\mathfrak{R}(u,u) + 2\mathfrak{R}(\nabla_{r\partial_r}u,u).
  \end{align*}

  Combining these, the divergence term gives
  \begin{align*}
    \nabla \cdot \left(|\nabla u|^2+\mathfrak{R}(u,u)\right)r \partial_r
    &= (m-2)\left(|\nabla u|^2 + \mathfrak{R}(u,u)\right) +
    2\left(\nabla \nabla_{r\partial_r}u \cdot \nabla u + \mathfrak{R}(\nabla_{r\partial_r}u,u)\right).
  \end{align*}

  Finally,
  \begin{align}
    D'(r) &= \frac{m-2}{r} D(r)
    + \frac{2}{r}\int_{B_r}\nabla \nabla_{r\partial_r}u \cdot \nabla u
    + \mathfrak{R}(\nabla_{r\partial_r}u,u) \label{dprime1}\\
    &= \frac{m-2}{r} D(r)
    + 2\int_{\partial B_r} |\nabla_{\partial_r}u|^2.\label{dprime}
  \end{align}
  The last equality follows from the Bochner formula and $u$ being
  harmonic.

  Next we work on $H(r)$.
  \begin{align*}
    H'(r) &= \frac{(m-1)}{r}H(r) + 2\int_{\partial B_r} u \cdot \nabla_{\partial_r}u.
  \end{align*}

  So
  \begin{align*}
    \frac{1}{r} + \frac{D'(r)}{D(r)} - \frac{H'(r)}{H(r)}
    &= 2\left(\frac{\int_{\partial B_r} |\nabla_{\partial_r}u|^2}{\int_{\partial B_r} u \cdot \nabla_{\partial_r} u} - \frac{\int_{\partial B_r} u \cdot \nabla_{\partial_r} u}{\int_{\partial B_r} |u|^2} \right) \ge 0
  \end{align*}
  by the Cauchy-Schwarz inequality. Equality holds if and only if
  \begin{align*}
    \nabla_{r\partial_r}u =h(r)u
  \end{align*}
  for some function $h(r)$.

  % Following a calculation using Christoffel
  % symbols, this is equivalent to
  % \begin{align*}
  %   u = f(r)(dr\wedge \eta_1(x)+r\eta_2(x)),
  % \end{align*}
  % where $\eta_1(x)$ is a $(k-1)$-form on $Y$ and $\eta_2(x)$ is a
  % $k$-form on $Y$. Later on we shall see what $f(r)$ is in the case of
  % $0$ and $1$-forms.
\end{proof}

We now turn to the singular setting. Suppose now that $C(Y)$ is a
limit Ricci-flat cone. By defining
$D(r) = \int_{\partial B_r} u \cdot \nabla_{\partial_r} u$ as in
\eqref{dalternative}, we see that $D(r)$ is well defined, provided
that the harmonic $k$-form $u$ is such that locally
$\sup |u| + \sup |\nabla_{r\partial_r}u| < \infty$. The integration by
parts in \eqref{dalternative} is then justified using cutoff functions
in Lemma~\ref{goodcutoff}. The precise argument is similar to the
proof of the Proposition below, so we omit it. We now generalize
Proposition \ref{monotonicity} to the case of singular cross sections.

\begin{prop}
  \label{monotonicity_singular}
  Let $C(Y)$ be a limit Ricci-flat cone of dimension $m \ge 2$. Let $u$
  a harmonic $k$-form on $C(Y)$ such that locally
  $\sup{|u|} + \sup{|\nabla_{r\partial_r} u|} < \infty$. Then $N(r)$ is
  well-defined and differentiable. Moreover, $N'(r) \ge 0$. $N(r)$ is
  constant if and only if $\nabla_{r\partial_r} u = h(r) u$ for some
  function $h(r)$.
\end{prop}

\begin{proof}
  We need to show that both $H(r)$ and $D(r)$ are differentiable. We
  will focus on $D(r)$ as it is more involved. Let $\phi_\epsilon $ be
  the cutoff function on $Y$ given in Lemma~\ref{goodcutoff}. Define
  \begin{equation*}
    u_\epsilon = (1-\phi_\epsilon) u,
  \end{equation*}
  and define $D_\epsilon(r)$ to be $D(r)$ with $u$ replaced by
  $u_\epsilon$. From a straightforward calculation and using the
  properties of the cutoff functions $\phi_\epsilon$, we see that
  $\lim_{\epsilon \to 0} D_\epsilon(r) = D(r)$. The goal is to show that
  $D'(r)$ exists by showing that
  $D'(r) = \lim_{\epsilon\to 0} D'_\epsilon(r)$. We now calculate
  $D'_\epsilon(r)$. From \eqref{dprime1}, we have
  \begin{align*}
    D_\epsilon'(r) = \frac{m-2}{r} D_\epsilon(r) + \frac{2}{r}\int_{B_r}\nabla
    \nabla_{r\partial_r}u_\epsilon \cdot \nabla u_\epsilon +
    \mathfrak{R}(\nabla_{r\partial_r}u_\epsilon,u_\epsilon).
  \end{align*}
  We cannot apply \eqref{dprime} since $u_\epsilon$ is no longer
  harmonic. To estimate the second term on the right hand side, we
  integrate by parts and use the Bochner formula:
  \begin{equation*}
    \begin{aligned}
      \frac{1}{r}\int_{B_r} \nabla\nabla_{r\partial_r} u_\epsilon
      \cdot \nabla u_\epsilon +
      \mathfrak{R}(\nabla_{r\partial_r}u_\epsilon, u_\epsilon)
      &= \int_{\partial B_r} |\nabla_{\partial_r} u_\epsilon|^2 \\
      &\phantom{???}+ \frac{1}{r} \int_{B_r} \nabla_{r\partial_r} u_\epsilon \cdot \nabla^*\nabla u_\epsilon + \mathfrak{R}(\nabla_{r\partial_r}u_\epsilon, u_\epsilon) \\
      &= \int_{\partial B_r} (1-\phi_\epsilon)^2 |\nabla_{\partial_r}u|^2 \\
      &\hphantom{= }-\frac{1}{r}\int_{B_r} (1-\phi_\epsilon)\Delta\phi_\epsilon \nabla_{r\partial_r}u \cdot u \\
      &\hphantom{= }+\frac{2}{r} \int_{B_r} \nabla_{r\partial_r} u
      \cdot \left((1-\phi_\epsilon) \nabla \phi_\epsilon \cdot \nabla
        u\right).
    \end{aligned}
  \end{equation*}

  Here the integration by parts in the first equality is justified
  using the fact that locally
  $\sup |u| + \sup |\nabla_{r\partial_r}u| < \infty$ as in
  \eqref{dalternative}. To conclude that
  $\lim_{\epsilon \to 0} D'_\epsilon(r)$ exists and satisfies the
  right equation, it is enough to show that last two terms on the
  right hand side vanish as $\epsilon \to 0$. By the Cauchy-Schwarz
  inequality and the assumptions that locally
  $\sup{|u|}+\sup{|\nabla_{r \partial_r}u|} < \infty$, the problem
  reduces to showing that
  \begin{equation*}
    \lim_{\epsilon \to 0} \int_Y |\nabla \phi_\epsilon|^2 = 0, \:\:\:\:\:\:
    \lim_{\epsilon \to 0} \int_Y |\Delta\phi_\epsilon| = 0.
  \end{equation*}
  But these are the properties of $\phi_\epsilon$. In sum, we have shown that
  \begin{equation*}
    \lim_{\epsilon \to 0} D'_\epsilon(r) = \frac{m-2}{r}D(r) + \int_{\partial B_r} |\nabla_{\partial_r}u|^2
    = D'(r).
  \end{equation*}
  The rest follows from the proof of Proposition~\ref{monotonicity}.
\end{proof}

% \begin{remark}
%   If we know that the curvature operator $\mathfrak{R}$ is nonnegative,
%   then we only need to assume that $u$ is locally $L^2$. All the other
%   bounds follow from Proposition \ref{linfinity}, Proposition \ref{w12}
%   and Proposition \ref{subharmonic}.
% \end{remark}

The monotonicity of the frequency implies the following $L^2$ three circle
theorem:

\begin{theorem}
  Let $C(Y)$ be a limit Ricci-flat cone of dimension $m \ge 2$. Let $u$
  be a harmonic $1$-form on $C(Y)$ such that locally
  $\sup |u| + \sup|\nabla_{r\partial_r}u| < \infty$. Then $H(r)$ is
  $\log$-convex with respect to $\log r$. Equality holds if and only if
  $\nabla_{r\partial_r} u = h(r) u$ for some function $h(r)$.
\end{theorem}

\begin{proof}
  \begin{align*}
    \frac{d \log H(r)}{d\log r}
    = \frac{r H'(r)}{H(r)}
    = (n-1) + 2 N(r).
  \end{align*}
  So
  \begin{align*}
    \frac{d^2 \log H(r)}{d (\log r)^2}
    = 2\frac{d N(r)}{d \log r}
    = 2rN'(r) \ge 0.
  \end{align*}
\end{proof}

Let
\begin{equation*}
  F(r) = \int_{B_r} u^2.
\end{equation*}
Integrating $H(r)$, we see that $F(r)$ also satisfies the three circle
theorem:

\begin{theorem}
  \label{threecircle}
  Let $C(Y)$ be a limit Ricci-flat cone of dimension $m \ge 2$. Let $u$
  be a harmonic $1$-form on $C(Y)$ such that locally
  $\sup |u| + \sup|\nabla_{r\partial_r}u| < \infty$. Then
  \[
    F(r) = \int_{B_r} u^2
  \]
  is $\log$-convex with respect to $\log r$. Equality holds if and only
  if $\nabla_{r\partial_r} u = h(r) u$ for some function $h(r)$.
\end{theorem}

\begin{proof}
  Let $0<r_1<r<r_2$ and let
  \begin{align*}
    \frac{1}{p} = \frac{\log{r_2}-\log{r}}{\log{r_2}-\log{r_1}},  \:\:\:\:\:\:
    \frac{1}{q} = \frac{\log{r}-\log{r_1}}{\log{r_2}-\log{r_1}}.
  \end{align*}
  Then we compute
  \begin{align*}
    F(r) &= \int_0^r H(s)ds \\
    &\le  \int_0^r H(r_1s/r)^{1/p} H(r_2s/r)^{1/q}ds \\
    &\le  \left(\int_0^r H(r_1s/r)ds\right)^{1/p} \left(\int_0^r H(r_2s/r)ds\right)^{1/q} \\
    &=
    F(r_1)^{1/p}F(r_2)^{1/q} \left(\frac{r}{r_1}\right)^{1/p} \left(\frac{r}{r_2}\right)^{1/q} \\
    &= F(r_1)^{1/p}F(r_2)^{1/q}.
  \end{align*}
\end{proof}

\section{Homogeneous $1$-forms}

Let us now focus on the case when the frequency function is
constant. In the case of harmonic $0$-forms, that is, when $u$ is a
harmonic function, the homogeneous condition
$\nabla_{r\partial_r} u = h(r)u$ is equivalent to
\begin{equation*}
  u = r^{s}g,
\end{equation*}
where $g$ is an eigenfunction on the cross section $Y$ with eigenvalue
$s(s+m-2)$. Thus the set of possible degrees $s$, denoted as
$\cD(C(Y))$, is determined by the spectrum of the cross section
$Y$. The following lemma, based on a lemma of Cheeger-Tian~\cite[Lemma
7.27]{CheegerTian}, characterizes the homogeneous condition in the
case of harmonic $1$-forms.

\begin{lemma}
  \label{homogeneous1}
  Let $u$ be harmonic $1$-form on a limit Ricci-flat cone $C(Y)$ of
  dimension $m$ such that locally
  $\sup |u| + |\nabla_{r\partial_r} u| < \infty$. Suppose
  \begin{align*}
    \nabla_{r\partial_r}u =h(r)u
  \end{align*}
  for some function $h(r)$. Then up to linear combination, $u$ can be
  written as one of the following:
  \begin{enumerate}[(I)]
  \item $u = rdr$ or $r^{-(m-1)}dr$.
  \item $u = d(r^{s+1}g(x))$, where $g(x)$ is an eigenfunction on $Y$
    with eigenvalue $(s+1)(s+m-1)$. $r^{s+1}g(x)$ is a harmonic function
    on $C(Y)$.
  \item $u = r^sg(x)dr - \frac{r^{s+1}}{s+m-3}dg(x)$, where $g$ is an
    eigenfunction on $Y$ with eigenvalue $(s-1)(s+m-3)$.
  \item $u = r^{s+1}\eta(x)$, where $\eta(x)$ is a coclosed eigen
    $1$-form on $Y$ with eigenvalue $(s+1)(s+m-3)$.
  \item $u = (\log{r})d(r^{-(m-4)/2}g(x))$, where $g(x)$ is an
    eigenfunction on $Y$ with eigenvalue $(-m^2/4)+m$.
  \item $u = r^{-(m-2)/2}(\log{r})\eta(x)$, where $\eta(x)$ is a
    coclosed eigen $1$-form on $Y$ with eigenvalue $(-m^2/4)+2m-4$.
  \end{enumerate}
  Note that type (V),(VI) are ruled out by the bounds on $u$. They are
  listed here for completeness. If $u$ is one of the above types, the
  power of $r$ in $|u|$ is called the growth rate of $u$. In (1), the
  growth rate is $1$ or $-(m-1)$. In (II), (III) and (IV), the growth
  rate is $s$.
\end{lemma}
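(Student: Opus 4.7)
The plan is to classify all solutions of the combined system $\Delta u = 0$, $\nabla_{r\partial_r}u = h(r)u$ by separation of variables on the regular set of $C(X)$, and then to justify the reduction to ODE analysis in the singular cone setting using the cutoff functions from Lemma \ref{goodcutoff}.

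First I would invoke the equality case at the end of the proof of Proposition \ref{monotonicity}: the homogeneity assumption forces $u$ to take the separated form
\begin{equation*}
u = f(r)\bigl(g(x)\,dr + r\,\eta(x)\bigr)
\end{equation*}
on the regular set, for some function $g$ on $X$ and $1$-form $\eta$ on $X$. Applying the Hodge decomposition on $X$, I would write $\eta = d_X\varphi + \eta_0$ with $d_X^*\eta_0 = 0$, and expand $g$, $\varphi$ in eigenfunctions of the scalar Laplacian on $X$ and $\eta_0$ in coclosed eigen $1$-forms on $X$. Since $\Delta$ is linear and the exact and coclosed modes on the cross section decouple under the cone Hodge Laplacian on $1$-forms, it suffices to treat a single spectral mode at a time.

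Next I would handle the two resulting model cases. In the coclosed case $u = c(r)\eta_0(x)$, with $\eta_0$ a coclosed eigen $1$-form on $X$ of Hodge eigenvalue $\mu$, the cone Hodge Laplacian collapses to a single Euler ODE for $c(r)$ whose indicial equation is $(s+1)(s+m-3) = \mu$; generic roots produce Case (IV), and the double root at $\mu = -m^2/4 + 2m - 4$ produces the logarithmic Case (VI). In the exact cross-section case $u = a(r)\varphi(x)\,dr + b(r)\,d_X\varphi(x)$, with $\varphi$ an eigenfunction of eigenvalue $\mu$, one obtains a coupled $2\times 2$ Euler system for $(a, b)$. Its two-dimensional solution space is generically spanned by the gradient $u = d(r^{s+1}\varphi)$, which is harmonic precisely when $\mu = (s+1)(s+m-1)$ (Case (II)), and the companion $u = r^s\varphi\,dr - \tfrac{r^{s+1}}{s+m-3}\,d\varphi$, harmonic when $\mu = (s-1)(s+m-3)$ (Case (III)). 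The constant-$\varphi$ mode $\mu = 0$ collapses to a purely radial $u = f(r)\,dr$, whose Euler ODE has roots $s=1$ and $s=-(m-1)$, giving the two solutions $r\,dr$ and $r^{-(m-1)}\,dr$ of Case (I); the double-root locus $\mu = -m^2/4 + m$ of the coupled system produces the logarithmic Case (V).

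The main obstacle will be the linear-algebraic bookkeeping of the $2 \times 2$ Euler system in the exact case, in particular pinpointing exactly when its characteristic equation has repeated roots and writing the resulting $\log r$ solutions in the normalized form stated in (V) and (VI). A secondary issue is that $X$ is only a metric measure space of type $\operatorname{RCD}^*(m-2, m-1)$, so the Hodge and spectral decompositions on $X$ must be interpreted in the singular setting; however, since the ODE derivations are carried out only on the regular set and $u$ is locally $W^{1,2}$ and locally $L^\infty$, the cutoffs $\phi_\epsilon$ of Lemma \ref{goodcutoff} justify all boundary contributions and integrations by parts at the singular stratum, exactly as in the proof of Proposition \ref{monotonicity_singular}.
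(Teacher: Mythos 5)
Your computation on the smooth part of the cone is essentially correct and reproduces the standard Hein--Sun separation-of-variables analysis: the homogeneity condition forces $u = f(r)(g(x)\,dr + r\,\eta(x))$, and the Euler ODE / indicial-root bookkeeping you describe does produce exactly the six cases, including the double-root logarithmic solutions. However, there is a genuine gap at the very first structural step, and it is precisely the point this lemma is designed to get around. You decompose $\eta = d_X\varphi + \eta_0$ via the Hodge decomposition on $X$ and then expand $\eta_0$ in coclosed eigen $1$-forms of the cross section. For a limit Ricci-flat cone the cross section $X$ is only an $\operatorname{RCD}^*(m-2,m-1)$ space with a closed singular set; neither a Hodge decomposition for $1$-forms nor a complete spectral decomposition of the Hodge Laplacian on coclosed $1$-forms is available there. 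The cutoff functions of Lemma \ref{goodcutoff} let you integrate by parts against a \emph{given} locally $W^{1,2}$, locally $L^\infty$ form, but they do not give you self-adjointness with compact resolvent for the $1$-form Laplacian on the singular $X$, which is what your mode-by-mode reduction silently assumes. So dismissing this as a "secondary issue" handled by the cutoffs is where the argument breaks in the singular setting (it is fine for smooth cones).

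The paper's proof is arranged specifically to avoid this: it only ever uses the spectral decomposition of the \emph{scalar} Laplacian on $X$ (which is available on an RCD space). Writing $u = f(r)g(x)\,dr + f(r)r\,\eta_1(x)$ and substituting into the two components of $\Delta u = 0$, one applies $\tilde d^*$ to the tangential equation, sets $g_1 = \tilde d^*\eta_1$, and derives the scalar fourth-order identity $(\tilde\Delta - (c_1+m))^2 g = (m^2+4c_1)g$. This shows $g$ splits into at most two eigenfunctions $g_\pm$; one then subtracts an explicit particular solution $\frac{1}{s+1}\tilde d g_+ - \frac{1}{s+m-3}\tilde d g_-$ from $\eta_1$, and the \emph{remainder} $\eta_2$ is shown directly (from harmonicity of $r^{s+1}\eta_2$) to be a single coclosed eigen $1$-form --- no decomposition of an arbitrary coclosed form is ever needed. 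To repair your argument you would either need to restrict to smooth cross sections, or replace your Hodge-theoretic first step with this function-level reduction.
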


\begin{proof}
  Let $C=C(Y)$ be a limit Ricci-flat cone of real dimension $m$. Any
  $1$-form $u$ on $C$ can be written as
  \begin{equation}
    \label{decomp}
    u=\kappa(r,x)dr+ \eta(r,x),
  \end{equation}
  where $\eta$ is the part tangent to the cross section $Y$. Following
  the calculation in Appendix B of Hein-Sun \cite{HeinSun}, denote prime
  as the derivative with respect to $r$ and denote anything with tilde
  the operators on $Y$. The operators and tensors on $Y$ extend
  trivially to $C(Y)$ by scaling. We have
  \begin{align*}
    \Delta u =
    &\left(\frac{1}{r^2}\tilde{\Delta}\kappa - \kappa'' - \frac{m-1}{r}\kappa'+ \frac{m-1}{r^2}\kappa - \frac{2}{r^3}\tilde{d}^*\eta\right)dr \\
    &+ \frac{1}{r^2}\tilde{\Delta}\eta - \eta'' -
    \frac{m-3}{r}\eta'-\frac{2}{r}\tilde{d}\kappa.
  \end{align*}
  Suppose $\Delta u = 0$. Then this is equivalent to
  \begin{equation}
    \label{eq:1}
    \frac{1}{r^2}\tilde{\Delta}\kappa - \kappa'' - \frac{m-1}{r}\kappa'+
    \frac{m-1}{r^2}\kappa - \frac{2}{r^3}\tilde{d}^*\eta = 0
  \end{equation}
  and
  \begin{equation}
    \label{eq:2}
    \frac{1}{r^2}\tilde{\Delta}\eta - \eta'' -
    \frac{m-3}{r}\eta'-\frac{2}{r}\tilde{d}\kappa = 0.
  \end{equation}
  Suppose $u$ satisfies the following condition:
  \begin{equation}
    \label{eq:3}
    \nabla_{r\partial_r} u = h(r) u.
  \end{equation}
  Using the decomposition \eqref{decomp} and
  \begin{equation*}
    \nabla_{r\partial_r} dx^i = -dx^i,
  \end{equation*}
  \eqref{eq:3} becomes
  \begin{align*}
    r\kappa' &= h\kappa, \\
    r\eta_i' &= (h+1)\eta_i,
  \end{align*}
  where $\eta = \eta_i(r,x)dx^i$. We can solve these ODEs in $r$ and get
  \begin{align*}
    \kappa &= f(r)g(x), \\
    \eta_i &= f(r)rh_i(x)
  \end{align*}
  for some functions $g(x), h_i(x)$ on $Y$, where
  \begin{align*}
    f(r)  = e^{\int \frac{h(r)}{r}dr}.
  \end{align*}
  In sum, $u$ can be written as
  \begin{equation}
    \label{eq:5}
    u = f(r)g(x)dr + f(r)r\eta_1(x),
  \end{equation}
  where $\eta_1(x)$ is a $1$-form on $Y$. Now we plug in \eqref{eq:5}
  into \eqref{eq:1} and \eqref{eq:2}. After rearranging, \eqref{eq:1}
  becomes
  \begin{equation}
    \label{eq:6}
    \frac{\tilde{\Delta}g - 2\tilde{d}^*\eta_1}{g} =
    \frac{r^2f''+(m-1)rf'-(m-1)}{f}=c_1,
  \end{equation}
  where $c_1$ is a constant. We thus have an ODE of $f$:
  \begin{equation}
    \label{eq:7}
    r^2f''+(m-1)rf'-(c_1+m-1)f=0.
  \end{equation}
  The equation of indicial roots, i.e. plugging $f=r^s$ to the ODE, is
  \begin{equation}
    \label{eq:8}
    s^2+(m-2)s-(c_1+m-1)=0.
  \end{equation}
  Without further assumptions on the geometry, the indicial roots could
  be distinct or repeated. In the former case, $f$ can be written as a
  linear combination of
  \begin{equation*}
    r^{s_\pm},
  \end{equation*}
  where
  \begin{equation*}
    s_\pm = \frac{-(m-2)\pm\sqrt{m^2+4c_1}}{2}.
  \end{equation*}
  In the latter case, $f$ can be written as a linear combination of
  \begin{equation*}
    r^{-\frac{m-2}{2}}, r^{-\frac{m-2}{2}}\log{r}.
  \end{equation*}

  Plugging the solution $f$ to the ODE \eqref{eq:7} to \eqref{eq:2} and
  then canceling the $f$'s and $r$'s, we get
  \begin{equation*}
    \tilde{\Delta}\eta_1-(c_1+2m-4)\eta_1-2\tilde{d}g = 0.
  \end{equation*}
  We now have a system of equations on $Y$:
  \begin{align}
    \tilde{\Delta}g-2\tilde{d}^*\eta_1 &= c_1g, \label{eq:9}\\
    \tilde{\Delta}\eta_1 - (c_1+2m-4)\eta_1 &= 2\tilde{d}g. \label{eq:10}
  \end{align}

  Note that at this point, we see that we can decompose the harmonic
  $1$-form $u$ according to the decomposition of $f$ into powers of
  $r$. So we may assume that $f=r^s$. The case when
  $f=r^{-(m-2)/2}\log{r}$ can be worked out similarly.

  Taking $\tilde{d}^*$ of \eqref{eq:10} and set $g_1=\tilde{d}^*\eta_1$,
  the system becomes
  \begin{align}
    \tilde{\Delta}g-2g_1 &= c_1g, \label{eq:11}\\
    \tilde{\Delta}g_1 - (c_1+2m-4)g_1 &= 2\tilde{\Delta}g. \label{eq:12}
  \end{align}
  Substituting the $g_1$'s in \eqref{eq:12} with \eqref{eq:11} and
  completing the square, we get
  \begin{equation}
    \label{eq:13}
    (\tilde{\Delta}-(c_1+m))^2g = (m^2+4c_1)g.
  \end{equation}
  Let
  \begin{equation*}
    \label{eq:14}
    g = \sum_\lambda g_\lambda
  \end{equation*}
  be the spectral decomposition of $g$ with respect to the Laplacian
  $\tilde{\Delta}$. See for example \cite{ding2002heat} and the
  references therein. Then \eqref{eq:13} yields the relation
  \begin{align*}
    \label{eq:15}
    \lambda &= (c_1+m) \pm \sqrt{m^2+4c_1} \\
    &= (s+m-1)(s+1) \text{ or } (s-1)(s+m-3) \\
    &= \lambda_{\pm}.
  \end{align*}
  Thus
  \begin{equation*}
    g = g_+ + g_-,
  \end{equation*}
  where
  \begin{equation*}
    \tilde{\Delta} g_\pm = \lambda_\pm g_\pm.
  \end{equation*}

  For now, we assume that $\lambda_\pm \ne 0$. Any $\eta_1$ that satisfies
  \begin{equation}
    \label{eq:17}
    \tilde{d}^*\eta_1 = \frac{\lambda_+-c_1}{2}g_+ + \frac{\lambda_--c_1}{2}g_-
  \end{equation}
  solves our system of equations. A particular solution is
  \begin{align*}
    \label{eq:18}
    \frac{\lambda_+-c_1}{2\lambda_+}\tilde{d}g_+ + \frac{\lambda_--c_1}{2\lambda_-}\tilde{d}g_-
    = \frac{1}{s+1}\tilde{d}g_+ -\frac{1}{s+m-3}\tilde{d}g_-.
  \end{align*}
  Setting $\eta_2 = \eta_1 - \frac{1}{s+1}\tilde{d}g_+ +\frac{1}{s+m-3}\tilde{d}g_-$, we get
  \begin{equation}
    \label{eq:20}
    u = \frac{1}{s+1}d(r^{s+1}g_+) + r^{s+1}\eta_2 +r^s g_-dr -\frac{r^{s+1}}{s+m-3}\tilde{d}g_-.
  \end{equation}
  It follows that $r^{s+1}\eta_2$ is harmonic and $\eta_2$ is a
  $\tilde{d}^*$-closed eigen $1$-form:
  \begin{equation}
    \label{eq:21}
    \tilde{\Delta}\eta_2 = (s+1)(s+m-3)\eta_2.
  \end{equation}

  The case when one of $\lambda_\pm$ is $0$ can be reduced to the
  special case when $\lambda_\pm =\lambda = 0$ using the calculation
  above.  When $\lambda = 0$, $g$ is a constant function. The case when
  $g=0$ is already covered above. May assume $g=1$. By
  \eqref{eq:11},\eqref{eq:12}, $c_1 = 0$ or $-(2m-4)$.  $c_1 = 0$
  implies $s = 1$ or $-(m-1)$. The case $c_1=-(2m-4)$ is not possible
  when $m \ne 2$, since a locally $L^\infty$, locally $W^{1,2}$ harmonic
  $1$-form on a closed manifold with singularities in codimension $4$ is
  automatically coclosed (and also closed). This can be seen easily
  using the cutoff functions in Lemma \ref{goodcutoff} and an
  integration by parts.
\end{proof}

\begin{remark}
  (1) The lemma actually holds for any smooth Riemannian cones without
  the bounds on $u$. (2) The main difference between the proof in
  \cite{HeinSun} and our proof is that our proof avoids the use of
  spectral decomposition for coclosed $1$-forms. This allows our proof
  to work in our singular setting.
\end{remark}

A special case of Lichnerowicz theorem also holds in our singular
setting.

\begin{lemma}[{\cite[Lemma~B.2]{HeinSun}}]
  Suppose $C(Y)$ is a limit Ricci-flat cone of real dimension $m \ge 3$,
  so that we have ${Ric}_Y = (m-2)g_Y$. Let $\eta$ be a locally
  $L^\infty$, locally $W^{1,2}$ coclosed $1$-form on $\cR \cap Y$. If
  $\Delta \eta = \lambda \eta$ for some $\lambda \in \R$, then
  $ \lambda \ge 2m-4$. When $\lambda = 2m-4$, $\eta$ is dual to a Killing
  vector field. Alternatively, suppose $\eta$ is a closed $1$-form with
  $\Delta \eta = \lambda \eta$. Then $\lambda \ge m-1$.
\end{lemma}

\begin{proof}
  We prove the coclosed case. The closed case is entirely similar. Let
  $\phi_\epsilon$ be the cutoff function supported in the
  $\epsilon$-neighborhood of the singular set of $Y$ as given in Lemma
  \ref{goodcutoff}. Let $\eta$ be a coclosed eigen $1$-form on (the
  regular set of) $Y$ with eigenvalue $\lambda$. By the Bochner formula,
  we compute
  \begin{align*}
    \int_Y \lambda (1-\phi_\epsilon) |\eta|^2
    &=\int_Y \langle \Delta\eta, (1-\phi_\epsilon) \eta \rangle \\
    &=\int_Y (1-\phi_\epsilon) |\nabla\eta|^2 + \int_Y (m-2)(1-\phi_\epsilon) |\eta|^2
    - \int_Y \langle \nabla\eta, \nabla\phi_\epsilon\otimes \eta \rangle.
  \end{align*}
  On the other hand, since $\eta$ is coclosed,
  \begin{align*}
    \int_Y \lambda (1-\phi_\epsilon) |\eta|^2
    &=\int_Y \langle d^*d\eta, (1-\phi_\epsilon) \eta \rangle \\
    &=\int_Y (1-\phi_\epsilon) |d\eta|^2 - \int_Y \langle d\eta,d\phi_\epsilon \wedge \eta \rangle.
  \end{align*}
  As in \cite{HeinSun}, we can decompose $\nabla \eta$ into trace-less
  symmetric, trace, and skew-symmetric parts as
  \begin{align*}
    |\nabla \eta|^2 = |\nabla^{sym}_0 \eta|^2 +
    \left|\frac{\operatorname{tr}(\nabla\eta)}{m-1}\right|^2 +
    |\nabla^{skew}\eta|^2.
  \end{align*}
  Note that $|\nabla^{skew}\eta|^2 = \frac{1}{2}|d\eta|^2$. Thus
  \begin{align*}
    \int_Y (1-\phi_\epsilon)|\nabla\eta|^2 \ge \frac{1}{2} \int_Y (1-\phi_\epsilon) |d\eta|^2,
  \end{align*}
  and we have
  \begin{align*}
    \int_Y \lambda(1-\phi_\epsilon)|\eta|^2 \ge \int_Y 2(m-2)(1-\phi_\epsilon)|\eta|^2
    + 2\int_Y \langle d\eta, d\phi_\epsilon \wedge \eta \rangle
    - 2 \int_Y \langle \nabla\eta, \nabla\phi_\epsilon\otimes\eta \rangle.
  \end{align*}
  Thus it's enough to show that the last two terms tend to $0$ as
  $\epsilon \to 0$. But this follows from the Cauchy-Schwarz inequality,
  $u$ is locally $L^\infty$, $u$ is locally $W^{1,2}$
  (Proposition~\ref{w12}) and
  \begin{align*}
    \lim_{\epsilon \to 0} \int_Y |\nabla\phi_\epsilon|^2 = 0.
  \end{align*}
  Finally, the equality holds precisely when $\nabla\eta$ is
  anti-symmetric; i.e. the dual of $\eta$ is Killing.
\end{proof}

We can now rule out the unwanted parts in the decomposition in Lemma
\ref{homogeneous1}.

\begin{cor}
  \label{exact}
  Let $C(Y)$ be a limit Ricci-flat cone of real dimension at least $4$.
  Let $u$ be a locally $L^\infty$ harmonic $1$-form on $C(Y)$ satisfying
  \begin{equation*}
    \nabla_{r\partial_r} u = h(r) u
  \end{equation*}
  for some smooth function $h$. Suppose further that the growth rate of
  $u$ is less than $1$, i.e.
  \begin{equation*}
    |u| \le C(1+r)^{\bar{s}}
  \end{equation*}
  for some $\bar{s} < 1$. Then $u$ is actually exact:
  \begin{equation*}
    u = d(r^{s+1}g(x))
  \end{equation*}
  for some $0\le s \le \bar{s}$, where $g$ is an eigenfunction on $Y$
  with eigenvalue $(s+1)(s+m-1)$. Note that $r^{s+1}g(x)$ is a harmonic
  function on $C(Y)$.

  Assume furthermore that $C(Y)$ is K\"ahler, i.e. $C(Y)$ is
  Calabi-Yau. Then any locally $L^2$ harmonic function $f$ on $C(Y)$
  with
  \begin{align*}
    |f| \le C(1+r)^{s'}, \:\: 0 \le s' < 2
  \end{align*}
  is the real part of a holomorphic function. In particular, $f$ is
  pluriharmonic.
\end{cor}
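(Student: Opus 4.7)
The strategy is to apply Lemma \ref{homogeneous1}, which classifies homogeneous locally $W^{1,2}$ harmonic $1$-forms on a limit Ricci-flat cone into six types, and then rule out five of them by combining the growth bound $\bar{s} < 1$, the local $L^2$ condition at the vertex (which translates to $s > -m/2$ for the relevant power of $r$), and the eigenvalue bounds on $X$ coming from $\ric_X = (m-2)g_X$: the Lichnerowicz inequality gives $\lambda_1 \ge m-1$ for nonconstant eigenfunctions, and the lemma just proved gives $\lambda \ge 2m-4$ for coclosed eigen $1$-forms.

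Type (I) is immediate: $u = r\,dr$ has growth rate $1$, violating $\bar{s} < 1$, and $u = r^{-(m-1)}dr$ fails to be locally $L^2$ at the vertex. For Types (V) and (VI), the prescribed eigenvalues $-m^2/4 + m$ and $-m^2/4 + 2m-4$ are incompatible with the nonnegativity of $\Delta_X$ and the coclosed bound $\lambda \ge 2m-4$ for $m \ge 4$, forcing the underlying eigenfunction/form to vanish. Types (III) and (IV) reduce to short quadratic inequalities: in Type (III), $(s-1)(s+m-3) \ge m-1$ with $s<1$ forces both factors to be negative, and substituting $t = 3-m-s > 0$ yields $t^2 + (m-2)t - (m-1) \ge 0$ with positive root $t = 1$, so $s \le 2-m$, contradicting $s > -m/2$ for $m \ge 4$; in Type (IV), $(s+1)(s+m-3) \ge 2m-4$ factors as $(s-1)(s+m-1) \ge 0$ after subtracting $2m-4$, forcing $s \le 1-m$ under $s<1$, again incompatible with the local $L^2$ bound. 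What remains is Type (II): for nonconstant $g$, Lichnerowicz gives $(s+1)(s+m-1) \ge m-1$, i.e.\ $s(s+m) \ge 0$, and local $L^2$ excludes $s \le -m$, so $s \ge 0$; the constant-$g$ cases either give $u=0$ or fail local $L^2$. The upper bound $s \le \bar{s}$ is then immediate from the growth hypothesis.

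For Part 2, I would decompose $f$ via spectral expansion on the cross-section $X$. Since $f$ is locally $L^2$ and harmonic, Propositions \ref{linfinity} and \ref{w12} give that $f$ is locally $L^\infty$ and locally $W^{1,2}$. Separation of variables writes $f(r,x) = \sum_i c_i r^{s_i} g_i(x)$, with $g_i$ an eigenfunction of $\Delta_X$ of eigenvalue $\lambda_i = s_i(s_i + m - 2)$; the second branch $r^{s_i^-}$ is excluded by local $L^2$ near the vertex for $m \ge 4$, and the subquadratic hypothesis $s' < 2$ restricts the sum to finitely many $s_i \in [0, s']$. For each piece $f_i = c_i r^{s_i} g_i$, parallelism of $J$ on the K\"ahler cone ($\nabla J = 0$) makes $d^c f_i = J\,df_i$ a harmonic $1$-form satisfying $\nabla_{r\partial_r}(d^c f_i) = (s_i - 1)\,d^c f_i$, hence homogeneous of growth rate $s_i - 1 < 1$. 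Applying Part 1 to each $d^c f_i$ produces a harmonic function $h_i$, subquadratic of growth rate $\le s_i$, with $dh_i = d^c f_i$. The identity $d(f_i + i h_i) = df_i + i\,d^c f_i = 2\,\partial f_i$ shows $f_i + i h_i$ is holomorphic, and summing exhibits $f$ as the real part of the subquadratic holomorphic function $f + i \sum_i h_i$.

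The main obstacle is the systematic bookkeeping in Part 1: matching the eigenvalue constraints of each type with the growth and local $L^2$ conditions, and verifying that the Lichnerowicz bounds remain valid in the singular (RCD) setting. Part 2 is then a clean consequence, though one must justify that separation of variables produces a valid finite homogeneous decomposition on a possibly singular cone---this rests on the analytic framework (cutoff functions, mean value inequality, $W^{1,2}$-regularity) developed earlier in Section 2.
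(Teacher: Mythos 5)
Your proof is correct and takes essentially the same route as the paper's: Part 1 rules out the unwanted types of Lemma \ref{homogeneous1} by combining the Lichnerowicz--Obata eigenvalue bounds with the growth and local $L^2$ constraints, and Part 2 reduces to Part 1 by spectral decomposition and applying it to $d^c f_i$. Your quadratic-inequality bookkeeping matches the paper's (which records the Type (III)/(IV) thresholds as $s\ge 2$ or $s\le 2-m$ in a shifted normalization of the exponent, equivalent to your $s\ge 1$ or $s\le 1-m$ when $|u|\sim r^s$), and your explicit treatment of Types (I) and (II) only adds detail the paper leaves implicit.
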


\begin{proof}
  Assume $C(Y)$ is a limit Ricci-flat cone of real dimension at least
  $4$. To prove the first part, we rule out types (III)-(VI). Types
  (V) and (VI) are ruled out by the assumption on $u$. For Type (III),
  Lichnerowicz implies that $s\ge 2$ or $s\le 2-m$. The first case
  violates the growth assumption, while the second case violates the
  $L^\infty_{loc}$ assumption. For type (IV), we have $s\ge 2$ or
  $s\le 2-m$. Again, these are ruled out by our assumption.

  Now, assume $C(Y)$ is Calabi-Yau. Let $f$ be a locally $L^2$
  harmonic function on $C(Y)$. By the spectral decomposition, we may
  assume $f$ is homogeneous. Thus $d^c f$ is a homogeneous harmonic
  $1$-form and has growth rate less than $1$. From the above we see
  that $d^c f = dh$ for some harmonic function $h$. So $f$ is
  pluriharmonic.
\end{proof}

\section{A local $L^2$ estimate for the exterior derivative}

In this section, we prove Theorem \ref{local_l2}. One key ingredient
is Lemma \ref{epsilon_monotonicity}, which roughly states that on a
Ricci-flat metric ball which is close to a limit Ricci-flat cone, a
harmonic $1$-form that is orthogonal to exact $1$-forms must grow at
least linearly.  This is true when the ball actually lies in a
Ricci-flat cone and the $1$-form is homogeneous, as we have seen in
Corollary \ref{exact}. To obtain the local $L^2$ estimate for the
exterior derivative, we use the fact that all but finitely many scales
in a Ricci-flat metric ball $B(p,1)$ are close to a Ricci-flat cone in
the Gromov-Hausdorff sense. This is a direct consequence of the
Cheeger-Colding cone rigidity theorem \cite{CheegerColding}. At each
scale $2^{-k}$ such that the manifold is actually close to a cone, we
apply Lemma \ref{epsilon_monotonicity} to obtain a growth estimate for
the orthogonal projection with respect to the $L^2$ inner product on
the $(2^{-k})$-ball. Together with an integration by parts, we then
concatenate these growth estimates above to conclude the proof of
Theorem \ref{local_l2}.

We need the following Green's formula on limit Ricci-flat cones:

\begin{lemma}
  Let $C(Y)$ be a limit Ricci-flat cone of dimension $m \ge 2$. Suppose
  $u, v$ are harmonic $1$-forms on $B_r = B(o,r) \subset C(Y)$ such that
  locally
  $\sup (|u| +|\nabla_{r\partial_r} u| + |v| + |\nabla_{r\partial_r} v|)
  < \infty$. Then
  \begin{align*}
    \int_{\partial B_r} \langle \nabla_{\partial_r} u, v \rangle =
    \int_{\partial B_r} \langle u, \nabla_{\partial_r} v \rangle.
  \end{align*}
\end{lemma}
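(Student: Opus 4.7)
The plan is to derive the identity from a standard Green's formula for the rough Laplacian $\nabla^*\nabla$, using the Bochner formula and the pointwise self-adjointness of the curvature operator $\mathfrak{R}$. The singular set of $C(X)$ is then handled by the cutoff functions of Lemma \ref{goodcutoff}, in the same way as in the proofs of Proposition \ref{w12} and Proposition \ref{monotonicity_singular}.

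First, on the regular set $\cR$, two integrations by parts on $B_r$ yield
\[
\int_{B_r} \bigl[\langle \nabla^*\nabla u, v\rangle - \langle u, \nabla^*\nabla v\rangle\bigr] = \int_{\partial B_r} \bigl[\langle u, \nabla_{\partial_r} v\rangle - \langle \nabla_{\partial_r} u, v\rangle\bigr].
\]
Since $u, v$ are harmonic for the Hodge Laplacian, the Bochner formula gives $\nabla^*\nabla u = -\mathfrak{R}(u)$ and $\nabla^*\nabla v = -\mathfrak{R}(v)$, and the pointwise self-adjointness of $\mathfrak{R}$ causes the left-hand side to vanish. On a smooth cone this already finishes the proof.

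For the singular case, I would apply this Green's computation to the truncated forms $u_\epsilon = (1-\phi_\epsilon) u$ and $v_\epsilon = (1-\phi_\epsilon)v$, where $\phi_\epsilon$ is the cutoff from Lemma \ref{goodcutoff}; since $u_\epsilon, v_\epsilon$ vanish in a neighborhood of $\Sigma$, all integration by parts on the regular set is legitimate. Expanding produces the expected main term (which vanishes as before by Bochner and self-adjointness) plus commutator errors of the schematic form $\nabla \phi_\epsilon \cdot \nabla u$, $u\,\nabla \phi_\epsilon$ and $u\,\Delta \phi_\epsilon$ (together with their counterparts with $u$ and $v$ swapped). Using the hypothesized local $L^\infty$ bounds on $u, v$ together with their local $W^{1,2}$ bounds, the Cauchy--Schwarz inequality controls each such error by $C\bigl(\norm{\nabla \phi_\epsilon}_{L^2} + \norm{\Delta \phi_\epsilon}_{L^1}\bigr)$, which tends to zero as $\epsilon \to 0$ by Lemma \ref{goodcutoff}. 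Passing to the limit gives the desired equality.

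The main obstacle is that $\partial B_r$ may itself meet the singular set $\Sigma$, so a priori the boundary integrals in the statement are only defined on $\cR \cap \partial B_r$, and the cutoff procedure introduces additional boundary terms of the form $\int_{\partial B_r} (\nabla_{\partial_r}\phi_\epsilon) \langle u, v\rangle$. One needs these to vanish and the surviving terms to converge to the correct boundary integrals. This is where the full strength of Lemma \ref{goodcutoff}, applied both on $C(X)$ and on its cross section $X$ (where $\Sigma \cap \partial B_r$ has codimension at least $3$), becomes essential: a co-area argument combined with the $L^2$-smallness of $\nabla \phi_\epsilon$ and the $L^\infty$ bounds on $u, v$ produces the required convergence of all boundary contributions.
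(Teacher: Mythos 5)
Your proposal is correct and follows essentially the same route as the paper: Green's formula for the rough Laplacian on the regular set, the Bochner formula to make the interior term vanish (the paper invokes the Ricci-flat condition directly rather than self-adjointness of $\mathfrak{R}$, which amounts to the same thing for $1$-forms here), and the cutoff functions of Lemma \ref{goodcutoff} together with Cauchy--Schwarz to absorb the commutator errors. The extra boundary contribution $\int_{\partial B_r}(\nabla_{\partial_r}\phi_\epsilon)\langle u,v\rangle$ that you rightly flag is passed over silently in the paper's proof; it disappears outright if one takes $\phi_\epsilon$ to be the radial extension of a cutoff on the cross section $X$, as is done in the proof of Proposition \ref{monotonicity_singular}, which is cleaner than the co-area argument you sketch.
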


\begin{proof}
  If the cone $C(Y)$ is smooth, then this follows from Green's formula
  and the fact that both $u,v$ are harmonic. Let $\phi_\epsilon$ be the
  good cutoff function on $B(o,r)$ supported outside the
  $\epsilon$-neighborhood of the singular set of $B(o,r)$. Define
  $u_\epsilon = \phi_\epsilon u$. Then by Green's formula,
  \begin{align*}
    \int_{\partial B_r} \langle \nabla_{\partial_r} u_\epsilon, v \rangle -
    \int_{\partial B_r} \langle u_\epsilon, \nabla_{\partial_r} v \rangle =
    \int_{B_r} \langle \nabla \cdot \nabla u_\epsilon, v \rangle -
    \int_{B_r} \langle u_\epsilon, \nabla \cdot \nabla v \rangle.
  \end{align*}
  By the Bochner formula and the Ricci-flat condition, the second term
  of the right hand side is $0$, and the first term on the right hand
  side can be computed as
  \begin{align*}
    \int_{B_r} \langle \nabla \cdot \nabla u_\epsilon, v \rangle =
    \int_{B_r} \langle (-\Delta \phi_\epsilon) u + 2  \nabla \phi_\epsilon \cdot \nabla u, v \rangle.
  \end{align*}
  Note that
  \begin{align*}
    \left|\int_{B_r} \langle (-\Delta \phi_\epsilon) u, v \rangle\right| \le
    C \int_{B_r} |\Delta \phi_\epsilon| \to 0
  \end{align*}
  as $\epsilon \to 0$, and that
  \begin{align*}
    \left|\int_{B_r} \langle \nabla \phi_\epsilon \cdot \nabla u, v \rangle\right|
    \le C \sqrt{\int_{B_r} |\nabla \phi_\epsilon|^2} \to 0
  \end{align*}
  as $\epsilon \to 0$ by the Cauchy-Schwarz inequality.
\end{proof}

The Green's formula implies the following orthogonality property of
homogeneous harmonic $1$-forms.

\begin{prop}
  \label{homogeneous_prop}
  Let $C(Y)$ be a limit Ricci-flat cone of dimension $m$. Suppose $u, v$
  are harmonic $1$-forms on $B_1 = B(o,1) \subset C(Y)$ such that both
  locally
  $\sup (|u| + |\nabla_{r\partial_r}u|+ |v| + |\nabla_{r\partial_r}v|) <
  \infty$. Suppose furthermore that $v$ is homogeneous of degree $s$,
  i.e.
  \begin{align*}
    \nabla_{r\partial_r} v = s v.
  \end{align*}
  Then there exists a constant $C \in \R$ such that
  \begin{align*}
    \int_{\partial B_r} \langle u, v \rangle = Cr^{2s+m-1}
  \end{align*}
  for all $r\in [0,1]$.
\end{prop}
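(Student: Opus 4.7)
The plan is to differentiate $I(r) := \int_{\partial B_r} \langle u, v \rangle$ directly and derive a first-order linear ODE in $r$ whose only solutions are scalar multiples of $r^{2s+m-1}$.

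First, exactly as in the computation of $H'(r)$ in the proof of Proposition \ref{monotonicity}, one uses the cone structure (so that $\partial B_r = \{r\}\times X$ has volume element $r^{m-1}\,d\mathrm{vol}_X$) to obtain
\begin{equation*}
I'(r) = \frac{m-1}{r} I(r) + \int_{\partial B_r} \langle \nabla_{\partial_r} u, v\rangle + \int_{\partial B_r} \langle u, \nabla_{\partial_r} v\rangle.
\end{equation*}
The factor $(m-1)/r$ comes from differentiating the scaling $r^{m-1}$ in the area element, and the last two terms come from differentiating the integrand.

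Next, I would apply the Green's-type identity from the immediately preceding lemma to the two locally $W^{1,2}$, locally $L^\infty$ harmonic $1$-forms $u$ and $v$, yielding
\begin{equation*}
\int_{\partial B_r} \langle \nabla_{\partial_r} u, v\rangle = \int_{\partial B_r} \langle u, \nabla_{\partial_r} v\rangle.
\end{equation*}
Combined with the homogeneity hypothesis $\nabla_{r\partial_r}v = sv$ (equivalently $\nabla_{\partial_r}v = (s/r)v$), both right-hand boundary integrals equal $(s/r) I(r)$.

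Plugging back in gives the ODE
\begin{equation*}
I'(r) = \frac{2s+m-1}{r}\, I(r),
\end{equation*}
which integrates to $I(r) = C r^{2s+m-1}$ for a constant $C$, as desired. The only subtle point is that the preceding symmetry lemma is invoked on arbitrary concentric balls $B_r \subset B_1$ rather than only on $B_1$; this is fine since $u$ and $v$ are harmonic and have the required local $W^{1,2}$, $L^\infty$ regularity on every such ball, so the cutoff-based integration by parts in that lemma applies verbatim. Thus I do not expect any serious obstacle; the proof is essentially a Wronskian-style calculation on the cone.
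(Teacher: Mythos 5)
Your proof is correct and follows essentially the same route as the paper: the paper differentiates the averaged boundary integral $\dashint_{\partial B_r}\langle u,v\rangle$ (so the $(m-1)/r$ term is absorbed), invokes the same preceding symmetry lemma to equate the two cross terms, and uses homogeneity of $v$ to get the ODE $r\partial_r I = 2sI$. Carrying the raw integral and the explicit $(m-1)/r$ factor, as you do, is an equivalent bookkeeping choice that lands directly on the stated exponent $2s+m-1$.
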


\begin{proof}
  Write
  \begin{align*}
    I(r) = \dashint_{\partial B_r} \langle u, v \rangle.
  \end{align*}
  Then by the previous lemma and a cutoff function argument, we have
  \begin{align*}
    r\partial_r I(r) &=
    \dashint_{\partial B_r} \langle \nabla_{r\partial_r} u, v \rangle +
    \dashint_{\partial B_r} \langle u, \nabla_{r\partial_r} v \rangle \\
    &= 2 \dashint_{\partial B_r} \langle u, \nabla_{r\partial_r} v \rangle \\
    &= 2s I(r).
  \end{align*}
  Integrating, we get
  \begin{align*}
    I(r) = Cr^{2s}
  \end{align*}
  for some constant $C \in \R$. This completes the proof.
\end{proof}

We are ready to state our key lemma:

\begin{lemma}
  \label{epsilon_monotonicity}
  For any $\delta > 0$ and $v > 0$, there exists $\epsilon > 0$ with the
  following significance: let $B(p,2)$ be a Riemannian metric ball such
  that $\vol(B(p,1)) > v$ and $|\ric(B(p,2))| < \epsilon$, and let
  $B(o,2) \subset C(Y)$ be a limit Ricci flat cone such that
  \begin{align*}
    \dgh(B(p,2), B(o,2)) < \epsilon.
  \end{align*}
  Let $u$ be a harmonic $1$-form on $B(p,1)$ such that $u$ is
  $L^2$-orthogonal to the space of closed and coclosed harmonic
  $1$-forms on $B(p,1)$. Then $u$ grows ``almost linearly'' in the
  following sense:
  \begin{align*}
    \dashint_{B(p,1)} |u|^2 \ge 2^{2(1-\delta)} \dashint_{B(p,1/2)}
    |u|^2.
  \end{align*}
\end{lemma}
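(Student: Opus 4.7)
I would argue by contradiction and a compactness/rescaling argument. Suppose the conclusion fails: then for some $\delta, v > 0$ we can find $\epsilon_i \to 0$, pointed Riemannian balls $B(p_i, 2)$ with $\vol(B(p_i,1)) \ge v$ and $|\ric| \le \epsilon_i$, limit Ricci-flat cones $B(o_i, 2) \subset C(X_i)$ with $\dgh(B(p_i,2), B(o_i, 2)) < \epsilon_i$, together with harmonic $1$-forms $u_i$ on $B(p_i, 1)$ that are $L^2$-orthogonal to the closed and coclosed harmonic $1$-forms yet fail the inequality. Normalize so that $\dashint_{B(p_i, 1)} |u_i|^2 = 1$, whence $\dashint_{B(p_i, 1/2)} |u_i|^2 > 2^{2(\delta - 1)}$. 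Since $\vol(B(p_i, 1)) \ge v$, Gromov compactness plus Lemma \ref{closetoricciflat} lets us pass to a subsequence so that $B(p_i, 2)$ converges in pointed Gromov-Hausdorff sense to a limit Ricci-flat cone $B(o, 2) \subset C(X)$.

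\textbf{Extracting a limit $1$-form.} By the Bochner formula $|u_i|^2$ is subharmonic on $B(p_i, 2)$ (since $\ric \to 0$, with a negligible error), so the mean value inequality (the smooth precursor of Proposition \ref{linfinity}) gives uniform $L^\infty$ bounds on compact subsets of $B(p_i, 3/2)$. An integration by parts against a cutoff, as in the proof of Proposition \ref{w12}, gives uniform $W^{1,2}$ bounds. Standard Cheeger-Colding theory (smooth convergence on the regular set, weak $W^{1,2}$ convergence overall) then extracts a subsequential limit $u_\infty$, which is a harmonic $1$-form on the regular part of $B(o, 2)$ that is locally $L^\infty$ and locally $W^{1,2}$ by Proposition \ref{linfinity} and Proposition \ref{w12}. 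The normalization passes to the limit, giving $\dashint_{B(o, 1)} |u_\infty|^2 \le 1$ and $\dashint_{B(o, 1/2)} |u_\infty|^2 \ge 2^{2(\delta - 1)}$.

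\textbf{Transferring the orthogonality.} The crucial step is to show $u_\infty$ is $L^2(B(o, 1))$-orthogonal to every Type (II) form $d(r^{s+1} g(x))$ of Corollary \ref{exact}. Given such a harmonic function $h = r^{s+1} g$ on $C(X)$, one approximates it by harmonic functions $h_i$ on $B(p_i, 1)$ obtained by solving the Dirichlet problem with boundary data given by a Gromov-Hausdorff lift of $h|_{\partial B(o,1)}$; Cheeger-Colding's harmonic approximation theorems then give $dh_i \to dh$ in $L^2$ on compact subsets. Each $dh_i$ is a closed and coclosed harmonic $1$-form on $B(p_i, 1)$, so by assumption $\int_{B(p_i, 1)} \langle u_i, dh_i \rangle = 0$; taking limits yields $\int_{B(o, 1)} \langle u_\infty, dh \rangle = 0$.

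\textbf{Frequency monotonicity and contradiction.} By Proposition \ref{monotonicity_singular}, the frequency function $N(r)$ of $u_\infty$ is nondecreasing on $(0, 1]$. Decomposing $u_\infty$ into homogeneous modes via Lemma \ref{homogeneous1} and applying Corollary \ref{exact}, every mode of growth rate $s < 1$ must be of Type (II), i.e., of the form $d(r^{s+1} g)$; but the orthogonality established in the previous paragraph pairs $u_\infty$ against each such mode, forcing all Type (II) components of growth rate less than $1$ to vanish. Hence every surviving mode has growth $\ge 1$, so $\lim_{r \to 0^+} N(r) \ge 1$, and monotonicity gives $N(r) \ge 1$ on $(0, 1]$. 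Integrating $\frac{d}{d \log r} \log F(r) = m + 2 N(r) \ge m + 2$ from $1/2$ to $1$ (equivalently, the three-circle theorem applied together with $N \ge 1$) produces
\begin{equation*}
\dashint_{B(o, 1)} |u_\infty|^2 \ge 4 \dashint_{B(o, 1/2)} |u_\infty|^2,
\end{equation*}
which strictly contradicts $\dashint_{B(o,1)} |u_\infty|^2 \le 2^{2(1-\delta)} \dashint_{B(o,1/2)}|u_\infty|^2$ whenever $\delta > 0$. The main obstacle will be the orthogonality transfer together with the decomposition step: justifying Cheeger-Colding approximation of the specific homogeneous harmonic functions $r^{s+1} g$ on the singular cone, and rigorously realising the spectral decomposition of $u_\infty$ in the presence of codimension-$4$ singularities, where one has to rely on the RCD-type cutoff machinery developed in Section 2.
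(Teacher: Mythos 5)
Your setup — contradiction, normalization, Gromov--Hausdorff compactness, extraction of a nonzero locally $W^{1,2}$, locally $L^\infty$ harmonic limit $u_\infty$ on $B(o,1)$, and the transfer of the orthogonality condition by approximating harmonic functions on the cone by harmonic functions on $B(p_i,1)$ (the paper does this via Cheeger's transplantation theorem, Lemma 10.7 of \cite{Cheeger}, plus the gradient estimate) — matches the paper's argument. The problem is your concluding step. You propose to ``decompose $u_\infty$ into homogeneous modes via Lemma \ref{homogeneous1}'' and then kill every sublinear mode by orthogonality. Lemma \ref{homogeneous1} only \emph{classifies} harmonic $1$-forms that are already homogeneous; it does not produce a mode decomposition of an arbitrary harmonic $1$-form. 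Producing one would require a spectral decomposition for the Laplacian acting on functions \emph{and on coclosed $1$-forms} on the singular cross section $X$, and the paper explicitly states (Remark \ref{theremark} and the discussion preceding Theorem \ref{LS}) that no such decomposition is available when $X$ has codimension-$4$ singularities; the entire proof is engineered to avoid it. So the step you flag as ``the main obstacle'' is not a technicality to be checked but a genuine gap in this approach.

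The paper closes the argument differently, using only a \emph{single} homogeneous form. From the limit inequality $\dashint_{B(o,1)}|u_\infty|^2\le 2^{2(1-\delta)}\dashint_{B(o,1/2)}|u_\infty|^2$, the $L^2$ three-circle theorem (Theorem \ref{threecircle}, a consequence of frequency monotonicity, which needs no spectral theory) propagates the same doubling bound to every scale $2^{-i}$. Rescaling $u_\infty$ at the vertex and passing to a further limit then yields one nonzero \emph{homogeneous} harmonic $1$-form $v$ of degree $s\le 1-\delta$, to which Lemma \ref{homogeneous1} and Corollary \ref{exact} do apply, giving $v=df$ with $f\in\cH_{<2}(C(X))$. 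Finally, Proposition \ref{homogeneous_prop} gives the exact power law $\int_{\partial B_r}\langle u_\infty, df\rangle=Cr^{2s+m-1}$; since $u_\infty$ blows up to $v=df$ at the vertex, $C>0$, so the pairing is strictly positive on all of $B(o,1)$ — contradicting the transplanted orthogonality. If you replace your decomposition-plus-frequency step with this three-circle/blow-up/pairing argument, your proof becomes the paper's.
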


\begin{proof}
  We argue by contradiction. Let $B(p_i, 2)$ be a sequence of
  Riemannian metric balls with $\vol(B(p_i,1)) > v$ and
  $|\ric(B(p_i,2))| < \epsilon_i$, and let $C(Y_i)$ be a sequence of
  limit Ricci-flat cones with vertex $o_i$. Suppose that
  \begin{align*}
    \dgh(B(p_i,2), B(o_i,2)) < \epsilon_i
  \end{align*}
  for each $i$, where $\epsilon_i \to 0$.

  Suppose for contradiction that there exist $\delta > 0$ and a sequence
  $u_i$ of harmonic $1$-forms on $B(p_i,1)$ satisfying the
  $L^2$-orthogonal condition on $B(p_i,1)$, such that
  \begin{align*}
    \dashint_{B(p_i,1)} |u_i|^2 < 2^{2(1-\delta)} \dashint_{B(p_i,1/2)}
    |u_i|^2.
  \end{align*}

  After passing to a subsequence, we may assume that both $B(p_i,2)$
  and $B(o_i, 2)$ converge to $B(o,2)$ in a limit Ricci-flat cone
  $C(Y)$. We may normalize $u_i$ so that
  \begin{align*}
    \dashint_{B(p_i,1/2)} |u_i|^2 = 1.
  \end{align*}
  So
  \begin{align*}
    \dashint_{B(p_i,1)} |u_i|^2 < 2^{2(1-\delta)}
  \end{align*}
  is uniformly bounded. Thus by passing to a subsequence, we may
  assume that $u_i$ converges to a nonzero harmonic $1$-form $u$ on
  $B(o,1)$. The convergence is smooth on any compact subset of the
  regular set of $B(o,1)$, and both $u_i$ and $u$ are uniformly
  bounded on $B(o,1/2)$ by Proposition~\ref{linfinity}. Taking limit
  of the above inequality, we see that
  \begin{align*}
    \dashint_{B(o,1)} |u|^2 \le 2^{2(1-\delta)}.
  \end{align*}
  Note that since $u$ is uniformly bounded and the convergence is
  smooth away from the singular set, we have
  \begin{align*}
    \dashint_{B(o,1/2)} |u|^2 = \lim_{i\to\infty} \dashint_{B(p_i,1/2)} |u_i|^2 = 1
  \end{align*}
  This follows from an argument similar to
  \cite[Lemma~2.16]{DonaldsonSunII} using Colding's volume convergence
  theorem~\cite{ColdingVolume}.
  
  By Lemma~\ref{nablar}, we have that locally
  $|\nabla_{r\partial_r}u| < \infty$. Applying the $L^2$ three circle
  theorem \ref{threecircle}, we have
  \begin{align}\label{eq:growth}
    \dashint_{B(o,2^{-i})} |u|^2 \le 2^{2(1-\delta)} \dashint_{B(o,2^{-i-1})} |u|^2.
  \end{align}
  We can extract the ``lowest order term'' of $u$ as follows. Let
  \begin{align*}
    v_i = \frac{\phi_i^*u}{\|\phi_i^*u\|_{L^2(B(o,1))}},
  \end{align*}
  where $\phi_i: C(Y) \to C(Y)$ is the scaling by $2^{-i}$. Thus $v_i$
  is uniformly bounded on any open subsets of the regular set of
  $B(o,1/2)$. After passing to a subsequence, $v_i$ converges to a
  nonzero harmonic $1$-form $v$ on $B(o,1/2)$. By the growth estimate
  \eqref{eq:growth} we see that the $L^2$ ratio of $v$ is constant. So
  $v$ is homogeneous by Theorem \ref{threecircle}.  By Corollary
  \ref{exact}, $v=df$, where $f$ is a harmonic function with growth
  rate $0\le s < 2$. Since
  \begin{align*}
    \int_{B(o,1/2)} \langle v, v \rangle > 0,
  \end{align*}
  we have, for large enough $i$,
  \begin{align*}
    0 < \int_{B(o,1/2)} \langle \phi_i^*u, v \rangle
    = 2^{mi} \int_{B(o,2^{-i-1})} \langle u, (\phi_i^{-1})^* v \rangle
    = 2^{mi+si} \int_{B(o,2^{-i-1})} \langle u, v \rangle,
  \end{align*}
  where $m$ denotes the dimension of $C(Y)$, $s$ denotes the degree of
  $v$, and we use the homogeneity of $v$ in the last equality. By
  Proposition \ref{homogeneous_prop}, it follows that
  \begin{align*}
    \int_{B(o,1)} \langle u, v \rangle > 0.
  \end{align*}

  On the other hand, by \cite[Theorem~2.1]{ding2004existence} (see
  also \cite[Lemma~4.1]{Xu}), after passing to a subsequence there
  exists a sequence of harmonic functions $f_i$ on $B(p_i,2)$ such
  that $f_i$ converges to $f$ uniformly in the Gromov-Hausdorff
  sense. By the Schauder estimates, it follows that $f_i$ converges to
  $f$ in $C^\infty$ on compact subsets of $B(o,1) \cap \cR$ in the
  Gromov-Hausdorff sense. Also by the Cheng-Yau gradient estimate,
  $df_i$ and $df$ are uniformly bounded on $B(p_i,1)$ and $B(o,1)$,
  respectively. It follows from an argument similar to
  \cite[Lemma~2.16]{DonaldsonSunII} that we can take limit of the
  $L^2$ orthogonal condition and we get
  \begin{align*}
    0 = \lim_{i\to\infty} \int_{B(p_i,1)} \langle u_i, df_i \rangle =
    \int_{B(o,1)} \langle u, df \rangle,
  \end{align*}
  which is a contradiction.
\end{proof}

We can now prove our main theorem for harmonic $1$-forms.

\begin{proof}[Proof of Theorem \ref{local_l2}]
  Let $\epsilon > 0$ be given as in Lemma \ref{epsilon_monotonicity}. We
  first prove the case when $|\mathrm{Ric}| \le \epsilon$.

  From Cheeger-Colding's cone rigidity theorem~\cite{CheegerColding}
  and volume comparison (see
  e.g. \cite[p.334]{CheegerNaberInventiones} or
  \cite[Proposition~2.29]{ChiuThesis} for details), there exists a
  number $N(v,\epsilon)$ such that for all but $N(v,\epsilon)$ of
  $k \in \N$, we have
  \begin{align*}
    \dgh(B(p,2^{-k+1}), B(o_k, 2^{-k+1})) < \epsilon 2^{-k+1}
  \end{align*}
  for some $B(o_k, 2^{-k+1})$ inside a metric cone $C(Y_k)$. By a
  compactness argument, we can assume these metric cones $C(Y_k)$ are
  limit Ricci-flat cones. Set $u_0 = u$. We define $u_k$ on
  $B(p, 2^{-k})$ inductively.

  For each of the good $k$, let $u_k$ be the $L^2$ orthogonal projection
  of $u_{k-1}$ onto the $L^2$ complement of the space of closed,
  coclosed $1$-forms on $B(o, 2^{-k})$. Note that this space is closed in
  $L^2$: let $w_i$ be a sequence of closed, coclosed $L^2$ $1$-forms on
  $B(p,1)$ such that $w_i \to w$ in $L^2$ for some $L^2$ 1-form on
  $B(p,1)$. Then this implies that $dw = 0$ and $d^*w = 0$ in the weak
  sense. Elliptic regularity then implies that $dw =0$ and $d^*w = 0$ in
  the strong sense.

  By Lemma \ref{epsilon_monotonicity},
  \begin{align*}
    \dashint_{B(p,2^{-k-1})} |u_{k+1}|^2 \le 2^{-2(1-\delta)}\dashint_{B(p,2^{-k})} |u_k|^2.
  \end{align*}

  For finitely many bad $k$, set $u_k = u_{k-1}$. The following
  inequality holds by volume comparison:
  \begin{align*}
    \dashint_{B(p, 2^{-k-1})} |u_{k+1}|^2 \le 2^{-2(1-\delta)}C \dashint_{B(p,2^{-k})} |u_k|^2
  \end{align*}
  for some constant $C$ depending on $\delta$ and $v$. Concatenating
  the above two types of inequalities, we get
  \begin{align*}
    \dashint_{B(p,2^{-k})} |u_k|^2 \le
    C(\epsilon,\delta,v)2^{-2k(1-\delta)} \dashint_{B(p,1)} |u|^2.
  \end{align*}

  Now, pick $2^{-k-1} \le 2r \le 2^{-k}$, and we have
  \begin{align*}
    \dashint_{B(p,2r)} |u_k|^2 \le C\dashint_{B(p,2^{-k})} |u_k|^2.
  \end{align*}
  Since $\Delta |u_k|^2 = -2|\nabla u_k|^2-\ric(u_k, u_k)$, an integration by
  parts with a good cutoff function gives
  \begin{align*}
    \dashint_{B(p,r)} |\nabla u_k|^2
    \le \left(\frac{\epsilon}{2} + Cr^{-2}\right) \dashint_{B(p,2r)} |u_k|^2.
  \end{align*}
  As $r \le 1/2$, we may absorb $\epsilon/2$ into $Cr^{-2}$ on the right
  hand side of the above inequality. We also have the pointwise
  inequality
  \begin{align*}
    |du|^2 + |d^*u|^2 = |du_k|^2 + |d^*u_k|^2 \le 2|\nabla u_k|^2.
  \end{align*}
  The desired estimate then follows by combining the above four
  inequalities.

  Now we prove the case when $|\mathrm{Ric}| \le 1$. Let
  $r \in (0,1/2]$. First let us assume $r \le \sqrt{\epsilon}/2$,
  i.e. $r$ is small. By rescaling the metric $g = \epsilon \tilde{g}$,
  we can apply what we just proved to get
  \begin{align*}
    \dashint_{B(p,r)} |du|^2 + |d^*u|^2 \le
    C'\epsilon^{\delta-1}r^{-2\delta} \dashint_{B(p,\sqrt{\epsilon})}
    |u|^2 \le C''(\delta, v, \epsilon) r^{-2\delta}
    \dashint_{B(p,1)} |u|^2,
  \end{align*}
  where the last inequality follows from volume comparison.
  
  Now let us assume $r > \sqrt{\epsilon}/2$. In this case, we get a
  better estimate. Integrating by parts with a good cutoff function
  gives

  \begin{align*}
    \int_{B(p,r)} |du|^2 + |d^*u|^2\le 2\int_{B(p,r)} |\nabla u|^2 \le
    \frac{C_1}{r^2} \int_{B(p,2r)} |u|^2.
  \end{align*}

  So
  \begin{align*}
    \dashint_{B(p,r)} |du|^2 + |d^*u|^2 \le
    \frac{C_1}{r^2} \frac{\vol(B(p,1))}{\vol(B(p,r))}\dashint_{B(p,1)}|u|^2 \le
    C_2(\delta,v,\epsilon) \dashint_{B(p,1)} |u|^2
  \end{align*}
  by volume comparison.
\end{proof}

\begin{remark}
  \label{dstar}
  We have a pointwise bound for $d^*u$. Since $|d^*u|$ is subharmonic,
  we can apply the Li-Schoen mean value inequality to show that
  $|d^*u|^2$ is bounded by the $L^2$ average of $|d^*u|$, which in
  turn is bounded by the $L^2$ average of $u$ by an integration by
  parts with a good cutoff functions.
\end{remark}

\begin{proof}[Proof of Theorem \ref{ddbar}]
  Let $f$ be a harmonic function on $B(p,1)$. Applying Theorem
  \ref{local_l2} to the harmonic $1$-form $u = d^c f$ gives
  \begin{align*}
    \dashint_{B(p,r)} |\partial\bar{\partial} f|^2 &\le C r^{-2\delta}
    \dashint_{B(p,3/4)} |d^c f|^2 \\
    &= C r^{-2\delta} \dashint_{B(p,3/4)} |df|^2 \\
    &\le C' r^{-2\delta} \dashint_{B(p,1)} f^2,
  \end{align*}
  where the last inequality follows from an integration by part with a
  good cutoff function and a volume comparison.
\end{proof}

Now that we have the analogue of the gradient estimate, we are ready
to prove the Liouville type theorem for harmonic $1$-forms.

\begin{proof}[Proof of Theorem \ref{maintheorem}]
  It is enough to prove that for every $p \in M$,
  \begin{align*}
    \dashint_{B(p,1)} |u|^2 = 0.
  \end{align*}
  Let $r < 1/2$. We rescale the metric by $r^2$, use Theorem
  \ref{maintheorem}, then rescale back by $1/r^2$ and get
  \begin{align*}
    \dashint_{B(p,1)} |du|^2 + |d^*u|^2 \le
    C r^{2-2\delta} \dashint_{B(p,1/r)} |u|^2 \le
    Cr^{2-2\delta-2s}.
  \end{align*}
  We choose $\delta > 0$ such that $2-2\delta-2s > 0$. Note that since
  $M$ has maximal volume growth, the constant $C$ only depends on a
  fixed volume lower bound and $\delta$. Hence it does not depend on
  $r$. Letting $r \to 0$, we get that $u$ is both closed and coclosed.

  On the other hand, by Corollary 1.5 (3) of Anderson \cite{Anderson90},
  we know that $H^1(M) = 0$. So $u = df$ for some function $f$. Since
  $d^*u = 0$, $f$ is harmonic.
\end{proof}

\begin{remark}
  \label{endremark}
  As mentioned in the introduction, using the method in Conlon-Hein
  \cite{ConlonHeinI}, we can relax the subquadratic growth condition
  to $o(r^2)$ in the AC case. Actually, we will show that we can
  replace the sublinear condition in Theorem \ref{maintheorem} by
  $o(r)$, assuming the manifold $M$ is AC Calabi-Yau. The key is that
  the metric $g$ converges to the cone metric $g_0$ on the tangent
  cone at infinity $C$ in $O(r^{-\epsilon})$ for some $\epsilon > 0$
  by definition. This implies that the Laplacian $\Delta$ with respect
  to $g$ and the Laplacian $\Delta_0$ with respect to $g_0$ vary in
  $O(r^{-\epsilon})$ in operator norm. Let $u$ be a harmonic $1$-form
  on $M$ such that $u = o(r)$. We show that $u = O(r^s)$ for some
  $s < 1$. Since $M$ and $C$ are diffeomorphic outside compact
  subsets, we may transplant $u$ to a $1$-form $u_0$ on $C$ such that
  $u = u_0$ outside a compact subset. Note that
  $\Delta_0 u_0 = (\Delta_0 - \Delta) u_0 = O(r^{-1-\epsilon})$ for
  $r$ large. Then we solve the equation $\Delta_0 v_0 = \Delta_0 u_0$
  on $C$ by solving ODEs. Thus $v_0 = O(r^{1-\epsilon})$. Since
  $u_0 - v_0 = o(r)$ is harmonic, it follows that $u_0 - v_0 = O(r^s)$
  for some $s<1$ by the Cheeger-Tian lemma (Corollary~\ref{exact}). So
  $u = (u_0 - v_0) + v_0 = O(r^{s'})$ for $r$ large, where
  $s' = \max\{1-\epsilon,s\}$.
\end{remark}

\end{document}